\newtheorem{theorem}{Theorem}
\newtheorem{definition}{Definition}
\newtheorem{lemma}[theorem]{Lemma}
\newtheorem{corollary}[theorem]{Corollary}
\newtheorem{observation}{Observation}
\newtheorem{proposition}{Proposition}
\newtheorem{conjecture}{Conjecture}
\newtheorem{problem}{Problem}
\newcommand{\bivec}[1]{\overset{\text{\tiny$\longleftrightarrow$}}{#1}}
\author
{
Raphael Steiner
}
\thanks{Department of Computer Science, Institute of Theoretical Computer Science, ETH Z\"{u}rich, Switzerland,  \texttt{raphaelmario.steiner@inf.ethz.ch}. The author was supported by an
ETH Zurich Postdoctoral Fellowship.}
\date{\today}
\title{Coloring hypergraphs with excluded minors}
\begin{document}
\maketitle

\begin{abstract}
Hadwiger's conjecture, among the most famous open problems in graph theory, states that every graph that does not contain $K_t$ as a minor is properly $(t-1)$-colorable. 

The purpose of this work is to demonstrate that a natural extension of Hadwiger's problem to hypergraph coloring exists, and to derive some first partial results and applications.

Generalizing ordinary graph minors to hypergraphs, we say that a hypergraph $H_1$ is a minor of a hypergraph $H_2$, if a hypergraph isomorphic to $H_1$ can be obtained from $H_2$ via a finite sequence of the following operations: 

\begin{itemize}
    \item deleting vertices and hyperedges,
    \item contracting a hyperedge (i.e., merging the vertices of the hyperedge into a single vertex).
\end{itemize}
\smallskip
First we show that a weak extension of Hadwiger's conjecture to hypergraphs holds true: For every $t \ge 1$, there exists a finite (smallest) integer $h(t)$ such that every hypergraph with no $K_t$-minor is $h(t)$-colorable, and we prove
$$\left\lceil\frac{3}{2}(t-1)\right\rceil \le h(t) \le 2g(t)$$ where $g(t)$ denotes the maximum chromatic number of graphs with no $K_t$-minor. Using the recent result by Delcourt and Postle that $g(t)=O(t \log \log t)$, this yields $h(t)=O(t \log \log t)$.

We further conjecture that $h(t)=\left\lceil\frac{3}{2}(t-1)\right\rceil$, i.e., that every hypergraph with no $K_t$-minor is $\left\lceil\frac{3}{2}(t-1)\right\rceil$-colorable for all $t \ge 1$, and prove this conjecture for all hypergraphs with independence number at most $2$. 

By considering special classes of hypergraphs, the above additionally has some interesting applications for ordinary graph coloring, such as:

\begin{itemize}
    \item every graph $G$ is $O(k t \log \log t)$-colorable or contains a $K_t$-minor model all whose branch-sets are $k$-edge-connected, 
    \item every graph $G$ is $O( q t \log \log t )$-colorable or contains a $K_t$-minor model all whose branch-sets are modulo-$q$-connected (i.e., every pair of vertices in the same branch-set has a connecting path of prescribed length modulo $q$),
    \item by considering cycle hypergraphs of digraphs, we obtain known results on strong minors in digraphs with large dichromatic number as special cases. We also construct digraphs with dichromatic number $\left\lceil\frac{3}{2}(t-1)\right\rceil$ not containing the complete digraph on $t$ vertices as a strong minor, thus answering a question by M\'{e}sz\'{a}ros and the author in the negative.
\end{itemize}
\end{abstract}

\section{Preliminaries}

In this short first section we introduce essential terminology and notation used throughout the paper. The reader familiar with hypergraphs may want to skip this technical section and consult it at a later stage should there be any unclarities.

A \emph{hypergraph} is a tuple $(V,E)$ consisting of a finite set $V$ of vertices and a set $E \subseteq 2^V\setminus\{\emptyset\}$ of hyperedges.  Throughout this paper, all hypergraphs and graphs considered are assumed to have only hyperedges of size at least $2$, i.e., they are loopless. We refer to a hyperedge $e \in E$ as a \emph{graph edge} if it is of size $2$.

We denote $V(H):=V$, $E(H):=E$ for the vertex- and edge-set of a hypergraph. Given two hypergraphs $H_1$ and $H_2$, we say that $H_1$ is a \emph{subhypergraph of $H_2$}, in symbols, $H_1 \subseteq H_2$, if $V(H_1) \subseteq V(H_2)$ and $E(H_1)\subseteq E(H_2)$. We further say that $H_1$ is a \emph{proper} subhypergraph if at least one of the two inclusions above is strict. 

Given a hypergraph $H$ and a set $W \subseteq V(H)$, we denote by $H[W]$ the \emph{subhypergraph of $H$ induced by $W$}, whose vertex-set is $W$ and whose edge-set is $\{e \in E(H)|e \subseteq W\}$. For a hypergraph $H$ and an edge $e \in E(H)$ we write $H-e:=(V(H),E(H)\setminus \{e\})$ for the subhypergraph obtained by deleting $e$. Similarly, for a vertex $v \in V(H)$ we denote by $H-v:=H[V(H)\setminus\{v\}]$ the induced subhypergraph obtained by deleting $v$ and all the hyperedges containing $v$. For deleting sets $W\subseteq V(H)$ of vertices or $F \subseteq E(H)$ of hyperedges we use the analogous notation $H-W:=H[V(H)\setminus W], H-F:=(V(H),E(H)\setminus F)$.

We say that a vertex subset $W$ is \emph{independent} in $H$, if $H[W]$ contains no hyperedges, and we denote by $\alpha(H)$ the maximum size of an independent set in $H$. 

A hypergraph $H$ is \emph{connected} if for every partition of its vertex-set into non-empty subsets $X$ and $Y$, there exists a hyperedge $e$ in $H$ with $e \cap X\neq\emptyset\neq e \cap Y$. Equivalently, $H$ is connected if for every pair $x, y$ of distinct vertices there exists $k \ge 2$, a sequence of vertices $x_1,\ldots,x_k$ in $H$ and a sequence $e_1,\ldots,e_{k-1}$ of hyperedges in $H$ such that $x=x_1, y=x_k$ and $x_i, x_{i+1} \in e_i$ for all $i \in [k-1]$.
We say that a subset $W$ of $V(H)$ is \emph{connected} in $H$ if the induced subhypergraph $H[W]$ is connected.

In this manuscript, as usual, a \emph{proper coloring} of a hypergraph with a color-set $S$ is defined as a mapping $c:V(H) \rightarrow S$ such that for any $s \in S$ the color-class $c^{-1}(s)$ is independent in $H$. In other words, we assign colors from $S$ to the vertices of $H$ such that no hyperedge is monochromatic. The \emph{chromatic number} $\chi(H)$ is the minimum possible size of a color-set that can be used for a proper coloring of $H$. Note that $\chi(H)$ coincides with the ordinary chromatic number when $H$ is a graph. For an integer $k$, a hypergraph $H$ is called \emph{$k$-color-critical} if $\chi(H)=k$ but $\chi(H')<\chi(H)$ for every proper subhypergraph $H'$ of $H$.

A \emph{fractional coloring} of a hypergraph $H$ is an assignment of real-valued weights $w(I) \ge 0$ to all independent vertex sets $I$ in $H$ such that $\sum_{v \in I}{w(I)} \ge 1$ holds for every $v\in V(H)$. The \emph{weight} of a fractional coloring is defined to be the total weight $\sum{w(I)}$, where the sum is taken over all independent sets in $H$. Finally, the \emph{fractional chromatic number} of a hypergraph $H$, denoted by $\chi_f(H)$, is the smallest total weight a fractional coloring of $H$ can achieve (this infimum is always attained). We refer to~\cite{araujo} for an example of previous usage of the fractional chromatic number of hypergraphs.

A hypergraph is called \emph{Sperner} if it does not contain distinct hyperedges $e,f$ such that $e \subset f$. Given a hypergraph $H$, we denote by $\text{min}(H)$ the subhypergraph on the same vertex-set as $H$, whose hyperedges are exactly the inclusion-wise minimal hyperedges of $H$, i.e., $e \in E(\text{min}(H))$ if and only if $e\in E(H)$ and there exists no hyperedge $f \in E(H)\setminus\{e\}$ with $f \subset e$. It is easy to check that a set of vertices is independent in $H$ if and only if it is independent in $\text{min}(H)$. Also, it is clear by definition that $\text{min}(H)$ is Sperner. This immediately implies the following.
\begin{observation}\label{obs:subsperner}
For every hypergraph $H$, the subhypergraph $\text{min}(H)\subseteq H$ is Sperner and satisfies $\alpha(H)=\alpha(\text{min}(H))$, $\chi(H)=\chi(\text{min}(H))$ and $\chi_f(H)=\chi_f(\text{min}(H))$.
\end{observation}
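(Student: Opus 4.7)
The proof is essentially unpacking the definitions, so my plan is to reduce everything to a single lemma about independent sets and then derive all three equalities from it.

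First I would verify that $\text{min}(H)$ is Sperner. This is immediate from the definition: if $e, f \in E(\text{min}(H))$ with $e \subsetneq f$, then $e \in E(H)$ and $e \subsetneq f$ already witnesses that $f$ is not inclusion-minimal in $H$, contradicting $f \in E(\text{min}(H))$.

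The core step is to show that a set $W \subseteq V(H)$ is independent in $H$ if and only if it is independent in $\text{min}(H)$. The forward direction is trivial since $E(\text{min}(H)) \subseteq E(H)$. For the backward direction, I would argue by contraposition: suppose $W$ is not independent in $H$, so there is some $f \in E(H)$ with $f \subseteq W$. Since $E(H)$ is finite, the set of edges of $H$ contained in $f$ has an inclusion-minimal element $e$, and $e$ is automatically inclusion-minimal in all of $E(H)$ (any $e' \in E(H)$ with $e' \subsetneq e$ would also be contained in $f$, contradicting minimality within $f$). Hence $e \in E(\text{min}(H))$ and $e \subseteq f \subseteq W$, so $W$ is not independent in $\text{min}(H)$ either.

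Once this equivalence is established, all three numerical equalities follow at once. The equality $\alpha(H) = \alpha(\text{min}(H))$ is immediate since the two hypergraphs have exactly the same family of independent sets. For $\chi$, a proper coloring is precisely a partition of the vertex set into independent sets, so the set of proper colorings of $H$ equals the set of proper colorings of $\text{min}(H)$, and thus the minimum number of colors agrees. For $\chi_f$, a fractional coloring is a weight function on independent sets satisfying the covering constraint, which depends only on the family of independent sets, so the linear program defining $\chi_f$ is literally the same for $H$ and $\text{min}(H)$. No step is a genuine obstacle; the only place where one has to be mildly careful is the finite-minimization argument in the backward direction of the independence equivalence, which relies on $E(H)$ being finite (guaranteed by our standing convention).
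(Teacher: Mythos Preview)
Your proposal is correct and follows exactly the approach the paper indicates: the paper states (without proof) that $\text{min}(H)$ is Sperner by definition and that the independent sets of $H$ and $\text{min}(H)$ coincide, from which the observation is said to follow immediately. You have simply written out these routine verifications in full, and every step is sound.
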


\section{Introduction}

 Recall that given a graph $G_1$, another graph $G_2$ is a \emph{minor} of $G_1$ if $G_2$ is isomorphic to at least one graph that can be obtained from $G_1$ through a finite sequence of the following operations:

\begin{itemize}
\item moving to a subgraph,
\item contracting an edge (i.e., identifying its endpoints into a common vertex, and identifying parallel edges created by this process afterwards).
\end{itemize}

Maybe the most important open problem in graph coloring remains Hadwiger's conjecture, claiming the following relationship between minor-containment and the chromatic number:

\begin{conjecture}[Hadwiger 1943~\cite{hadwiger}]
For every integer $t \ge 2$, if $G$ is a graph which does not contain $K_t$ as a minor, then $\chi(G) \le t-1$.
\end{conjecture}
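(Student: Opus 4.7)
\smallskip
\noindent\textbf{Proof proposal.} The stated conjecture is Hadwiger's conjecture, which, despite more than eighty years of concentrated effort, remains open for every $t \ge 7$. What follows is a forward-looking sketch of how one would attempt such a proof, not a plan leading to a complete argument.

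The standard setup takes $G$ to be a vertex-minimal counterexample: $\chi(G) \ge t$, no $K_t$-minor, and $|V(G)|$ minimum. Such a $G$ is colour-critical, has minimum degree at least $t-1$, and, by classical contraction-critical arguments going back to Dirac and Mader, is $(t-1)$-connected. First I would try to exploit this rigidity to build a $K_t$-minor directly: fix a vertex $v$, consider its neighbourhood $N(v)$, and use the high connectivity of $G-v$ to find pairwise vertex-disjoint connected subgraphs covering $N(v)$ whose contractions yield $K_{t-1}$; then add $v$ as the $t$-th branch-set. A complementary density-based route starts from the Kostochka--Thomason theorem that every $K_t$-minor-free graph has average degree $O(t\sqrt{\log t})$, sharpened recently by Delcourt and Postle to $O(t\log\log t)$; greedy colouring of a minimum-degree vertex then immediately yields $\chi(G)=O(t\log\log t)$.

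A more structural path would try to imitate the resolution of the small cases: for $t=5$, Wagner reduced Hadwiger to the Four Colour Theorem via a decomposition of $K_5$-minor-free graphs into planar pieces glued along triangles, and Robertson, Seymour, and Thomas extended this reduction to $t=6$. The main obstacle, and the reason the conjecture is stuck, is conceptual rather than technical: no candidate structure theorem for $K_t$-minor-free graphs is known for $t \ge 7$, the Robertson--Seymour graph-minor structure theorem is far too coarse to pin down the exact constant $t-1$, and density alone cannot distinguish between constant multiples of $t$ and $t-1$ itself. Realistic intermediate targets are therefore either to prove $\chi \le f(t)$ with $f(t)=(1+o(1))t$, or to prove the sharp bound under extra hypotheses, and, judging from the abstract, it is the latter philosophy that the present paper pursues in the hypergraph setting --- establishing the conjectured bound $\lceil \tfrac{3}{2}(t-1)\rceil$ when $\alpha \le 2$ and weaker $O(t\log\log t)$ bounds in full generality, rather than attacking Hadwiger's conjecture itself.
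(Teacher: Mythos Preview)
Your assessment is correct: the statement is Hadwiger's conjecture, and the paper does not prove it. It is introduced in the paper purely as background and motivation for the hypergraph analogue; there is no accompanying proof in the paper to compare your proposal against. Your closing remark that the paper ``pursues \ldots\ the hypergraph setting --- establishing the conjectured bound $\lceil \tfrac{3}{2}(t-1)\rceil$ when $\alpha \le 2$ and weaker $O(t\log\log t)$ bounds in full generality, rather than attacking Hadwiger's conjecture itself'' accurately captures the situation. The survey of partial approaches you give (contraction-critical structure, density bounds via Kostochka--Thomason and Delcourt--Postle, the Wagner and Robertson--Seymour--Thomas reductions for small $t$) is a fair summary of the state of the art, but none of it appears in the paper as a proof of the conjecture, because no such proof exists.
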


In this paper, we develop and study a hypergraph analogue of Hadwiger's conjecture. For this purpose, we generalize the definition of a graph minor to hypergraph minors in (what seems to the author) the most straightforward way as follows:

\begin{definition}
Let $H_1$ and $H_2$ be hypergraphs. We say that $H_2$ is a \emph{minor} of $H_1$ if $H_2$ is isomorphic to at least one hypergraph that can be obtained from $H_1$ via a finite sequence of the following operations:
\begin{itemize}
\item moving to a subhypergraph,
\item contracting a hyperedge (i.e., identifying all vertices contained in the hyperedge and removing loops or parallel hyperedges created by this process afterwards). 
\end{itemize}
\end{definition}

To avoid confusion, let us give a more formal description of a hyperedge-contraction in the following: Let a hypergraph $H$ and $e \in E(H)$ be given. Let $v \notin V(H)$ be a ``new'' vertex, which will serve as the contraction vertex for $e$. Define a map $\phi:V(H) \rightarrow (V(H)\setminus e) \cup \{v\}$ as follows: $\phi(x):=x$ for every $x \in V(H)\setminus e$, and $\phi(x):=v$ for every $x \in e$. We may now define the new hypergraph $H/e$ obtained by contracting $e$ as having vertex-set $V(H/e):=(V(H)\setminus e) \cup \{v\}$ and edge-set $E(H/e):=\{\phi(f)|f  \in E(H), f \not\subseteq e\}$. Note that, if we start from a hypergraph $H$ which is Sperner, the contracted hypergraph $H/e$ may no longer be Sperner.

We remark that various other definitions of minors for hypergraphs and simplicial complexes have been considered in the literature that are not addressed here, see for instance~\cite{adler,carmesin,nevo,wagner}.

For some of our proofs, it will be convenient to take a slightly different view of graph and hypergraph minors using so-called \emph{branch-sets} and \emph{minor models}. We extend these notions (which are well-established in graph minor theory) to hypergraphs as follows: 
\begin{definition}
Let $H_1$ and $H_2$ be hypergraphs. An \emph{$H_2$-minor model} in $H_1$ is a collection $(B_h)_{h \in V(H_2)}$ of disjoint non-empty sets of vertices in $H_1$, such that:
\begin{itemize}
    \item for every $h \in V(H_2)$, the set $B_h$ is connected in $H_1$, and
    \item for every hyperedge $f \in E(H_2)$, there is a hyperedge $e \in E(H_1)$ such that $e \subseteq \bigcup_{h \in f}{B_h}$ and $e \cap B_h \neq \emptyset$ for every $h \in f$.
\end{itemize}
The sets $B_h, h \in V(H_2)$ are called the \emph{branch-sets} of the $H_2$-minor model. 
\end{definition}
It is easy to see from the definition that if a hypergraph $H_1$ contains an $H_2$-minor model, then by contracting in $H_1$ all the edges within the connected branch-sets (in arbitrary order) and potentially deleting some superfluous edges and vertices afterwards, we obtain $H_2$ as a minor of $H_1$. Similarly, it is not hard to see and left for the reader to verify that if $H_1$ contains $H_2$ as a minor, then $H_1$ contains an $H_2$-minor model. 

\medskip

\paragraph{\textbf{New results.}} The main insight of this paper is that the above notion of hypergraph minors harmonizes well with the established notion of the hypergraph chromatic number $\chi(H)$, and allows for extending known partial results for Hadwiger's conjecture to hypergraph coloring. To discuss these statements more formally, we use the following notation: Given an integer $t \ge 2$, we denote by $g(t)$ the maximum chromatic number of a graph not containing $K_t$ as a minor, and similarly we denote by $h(t)$ the largest chromatic number of a hypergraph not containing $K_t$ as a minor. Note that at first glance it is far from obvious why the function $h(t)$ should be well-defined for every value of $t$. In our first main result below, we show that $h(t)$ exists for every $t \ge 2$ and, quite surprisingly, is tied to the graph function $g(t)$ by a factor of at most $2$.

\begin{theorem}\label{thm:main}
Every $K_t$-minor free hypergraph is $2g(t)$-colorable. Equivalently, $h(t) \le 2g(t)$.
\end{theorem}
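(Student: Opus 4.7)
By Observation~\ref{obs:subsperner} we may replace $H$ by $\min(H)$ and assume without loss of generality that $H$ is Sperner. The plan is to exhibit a vertex partition $V(H)=A\sqcup B$ such that every hyperedge of $H$ of size at least $3$ has vertices in both $A$ and $B$. Equivalently, we seek a proper $2$-coloring of the sub-hypergraph $H_{\ge 3}\subseteq H$ consisting of all hyperedges of size at least $3$.

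Given such a partition, both $H[A]$ and $H[B]$ contain only size-$2$ hyperedges and are therefore ordinary graphs. As sub-hypergraphs of $H$ they inherit $K_t$-minor-freeness, so by the definition of $g(t)$ we obtain $\chi(H[A]),\chi(H[B])\le g(t)$. Using disjoint palettes $\{1,\ldots,g(t)\}$ on $A$ and $\{g(t)+1,\ldots,2g(t)\}$ on $B$, we obtain a proper $2g(t)$-coloring of $H$: every size-$2$ hyperedge lies entirely in one part and is properly colored by that part's palette, while every size-$\ge 3$ hyperedge has vertices in both parts and hence receives colors from both palettes, so is non-monochromatic.

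The main obstacle is the key lemma that $H_{\ge 3}$ is $2$-colorable whenever $H$ has no $K_t$-minor. My plan is to argue by contradiction: extract a minimal non-$2$-colorable sub-hypergraph of $H_{\ge 3}$ and show that its internal connectivity and overlap structure among size-$\ge 3$ hyperedges is rich enough to realize a $K_t$-minor model inside $H$, contradicting the hypothesis. This step is delicate, because small $K_t$-minor-free non-$2$-colorable examples do exist (e.g.\ the Fano plane is $K_3$-minor-free yet $3$-chromatic), so the argument very likely requires a fallback in which we do not insist that $H[A]$ and $H[B]$ are pure graphs, but instead find a partition for which $\chi(H[A]), \chi(H[B]) \le g(t)$ by more direct structural means exploiting the $K_t$-minor hypothesis. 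Identifying the correct formulation of this fallback, and proving its existence in a way that uses $K_t$-minor-freeness essentially, is where the core of the work lies.
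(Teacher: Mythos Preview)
Your proposal has a genuine gap that you yourself identify: the key lemma ``$H_{\ge 3}$ is $2$-colorable whenever $H$ is $K_t$-minor free'' is simply false, as your Fano plane example shows (it is $K_3$-minor-free, all hyperedges have size $3$, and it is $3$-chromatic). Since your entire scheme rests on this lemma, the plan as stated cannot succeed, and the ``fallback'' you mention is not a plan at all---it is a restatement of the goal (find $A,B$ with $\chi(H[A]),\chi(H[B])\le g(t)$) without any mechanism for producing such a partition.

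The paper's proof does not attempt a bipartition. Instead it greedily partitions $V(H)$ into many pieces $X_1,\ldots,X_n$, each chosen to be \emph{maximal} subject to being connected and $2$-colorable in $H$. One then defines an auxiliary graph $G$ on $[n]$ where $\{i,j\}$ is an edge iff some hyperedge of $H$ is contained in $X_i\cup X_j$ and meets both. Because the $X_i$ are connected, $G$ is a minor of $H$, hence $K_t$-minor-free, hence $g(t)$-colorable. The product of a proper $g(t)$-coloring of $G$ with the local $2$-colorings of the $X_i$ gives $2g(t)$ colors. The entire weight of the argument is a claim showing this product coloring is proper: if a hyperedge $e$ meets only pieces whose indices form an independent set in $G$, then in fact $e\subseteq X_j$ for a single $j$. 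This is proved by a minimal-counterexample argument that exploits the \emph{maximality} of the pieces---if $e$ spread over several such pieces one could enlarge the earliest piece $X_j$ by $e\setminus X_j$ while keeping it connected and $2$-colorable. The maximality of the pieces is precisely the structural leverage that your approach lacks.
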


The same proof idea as for Theorem~\ref{thm:main} also works for fractional coloring, yielding the following analogous result.

\begin{proposition}\label{prop:fractional}
Let $t \ge 2$ be an integer, and let $g_f(t)$ be the supremum of the fractional chromatic numbers taken over all $K_t$-minor free graphs. Then every $K_t$-minor free hypergraph $H$ satisfies $\chi_f(H) \le 2 g_f(t)$.
\end{proposition}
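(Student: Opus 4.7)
The plan is to rerun the argument of Theorem~\ref{thm:main} in the fractional setting. The key structural fact underlying that theorem is that, from any $K_t$-minor-free hypergraph $H$ (which we may assume to be Sperner by Observation~\ref{obs:subsperner}), one can associate an auxiliary $K_t$-minor-free graph $G$ on $V(H)$ with the property that every independent set of $G$ induces a $2$-colorable subhypergraph of $H$; this is exactly what produces the factor of $2$ when one refines a proper coloring of $G$ into a proper coloring of $H$.

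Granted this auxiliary graph, I would establish the fractional statement as follows. Since $G$ is $K_t$-minor-free, we have $\chi_f(G)\le g_f(t)$. Fix an optimal fractional coloring $w$ of $G$ supported on $G$-independent sets $I_1,\dots,I_m$, so that $\sum_{j} w(I_j)=\chi_f(G)$ and $\sum_{j:\,v\in I_j}w(I_j)\ge 1$ for every $v\in V(H)$. For each index $j$, partition $I_j=I_j^{0}\cup I_j^{1}$ into two $H$-independent sets, using the defining property of $G$. Define an assignment $w'$ on the $H$-independent sets by distributing weight $w(I_j)$ to each of $I_j^{0}$ and $I_j^{1}$ (summing contributions in case of coincidence), and $0$ elsewhere.

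I would then check that $w'$ is a valid fractional coloring of $H$. For every $v\in V(H)$, using that $v$ belongs to exactly one of $I_j^{0}, I_j^{1}$ whenever $v\in I_j$, we obtain
\[
\sum_{J\ni v} w'(J) \;=\; \sum_{j:\,v\in I_j^{0}}w(I_j)+\sum_{j:\,v\in I_j^{1}}w(I_j) \;=\; \sum_{j:\,v\in I_j}w(I_j) \;\ge\; 1.
\]
The total weight of $w'$ equals $\sum_{j}2w(I_j)=2\chi_f(G)\le 2g_f(t)$, yielding $\chi_f(H)\le 2 g_f(t)$.

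The only real obstacle is a mild bookkeeping point in the proof of Theorem~\ref{thm:main}: to apply that proof verbatim in the fractional setting one needs to know that \emph{every} $G$-independent set (and not merely the color classes of one specific proper coloring of $G$) induces a $2$-colorable subhypergraph of $H$. This strengthening is automatic, however, because $2$-colorability is preserved under taking subhypergraphs and every independent set of $G$ is contained in a color class of some proper coloring of $G$. Once this is noted, the construction used for Theorem~\ref{thm:main} transfers to the fractional setting without alteration.
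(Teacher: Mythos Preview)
Your approach is essentially the paper's, and it works, but one descriptive slip deserves correction: the auxiliary graph $G$ in the proof of Theorem~\ref{thm:main} lives on the index set $[n]$ of the partition $V(H)=X_1\cup\cdots\cup X_n$, \emph{not} on $V(H)$ itself. Thus a $G$-independent set is a subset $I\subseteq[n]$, and the associated $2$-colorable piece of $H$ is $H\bigl[\bigcup_{i\in I}X_i\bigr]$; this is exactly what Claim~1 of that proof establishes (for \emph{every} $G$-independent $I$, not just colour classes), so your final ``bookkeeping'' paragraph is superfluous. With this adjustment your covering check becomes: for $v\in X_i$, each $I\ni i$ contributes $w(I)$ via whichever half of the chosen $2$-colouring of $\bigcup_{i'\in I}X_{i'}$ contains $v$, and hence $\sum_{I\ni i}w(I)\ge 1$ as required.

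For comparison, your lift of the fractional colouring is in fact a little cleaner than the paper's. You split each $\bigcup_{i\in I}X_i$ into two $H$-independent parts and put weight $w(I)$ on each. The paper instead fixes $2$-colourings $c_i$ of the individual $H[X_i]$ once and for all and then, for every $G$-independent $I$, spreads $w(I)$ uniformly over all $2^{|I|}$ sets $J(I,s)=\bigcup_{i\in I}c_i^{-1}(s_i)$, $s\in\{1,2\}^{I}$, each receiving weight $2^{1-|I|}w(I)$. Both routes produce total weight $2\chi_f(G)\le 2g_f(t)$, and in both the covering constraint at $v\in X_i$ collapses to $\sum_{I\ni i}w(I)\ge 1$; your version simply avoids the averaging over sign patterns.
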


Hadwiger's conjecture has been proved for $t \le 6$, the case $t=6$ was settled by Robertson, Seymour and Thomas~\cite{robertson}. Thus, $g(t)=t-1$ for $t \le 6$. Asymptotically, the best known bound is that $g(t)=O(t \log\log t)$ as proved by Delcourt and Postle~\cite{del}. Finally, it was proved by Reed and Seymour~\cite{reedseymour} that $g_f(t)\le 2(t-1)$. Combining these known partial results for Hadwiger's conjecture with Theorem~\ref{thm:main} and Proposition~\ref{prop:fractional}, we get the following set of immediate consequences.

\begin{corollary}\label{smallbounds}
Let $t \ge 2$ be an integer, and let $H$ be a hypergraph without a $K_t$-minor. Then the following hold. 
\begin{itemize}
    \item if $t \in \{2,3,4,5,6\}$, then $\chi(H)  \le 2t-2$.
    \item for $t\ge 3$, we have $\chi(H) \le Ct \log\log t$, where $C>0$ is some absolute constant.
    \item $\chi_f(H) \le 4t-4$.
\end{itemize}
\end{corollary}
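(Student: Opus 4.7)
The plan is entirely routine: this is packaged as a corollary precisely because each bullet follows by plugging a known partial result on Hadwiger's conjecture into either Theorem~\ref{thm:main} or Proposition~\ref{prop:fractional}. So I would not attempt any new argument; I would just cite and substitute.

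For the first bullet, I would invoke the case-by-case verification of Hadwiger's conjecture up through $t=6$ (culminating in Robertson--Seymour--Thomas~\cite{robertson} for $t=6$), which says $g(t)=t-1$ whenever $t\in\{2,3,4,5,6\}$. Applying Theorem~\ref{thm:main} then yields $\chi(H)\le h(t)\le 2g(t)=2(t-1)=2t-2$ for any $K_t$-minor-free hypergraph $H$ in this range. For the second bullet, I would feed the Delcourt--Postle bound $g(t)=O(t\log\log t)$~\cite{del} into Theorem~\ref{thm:main}, obtaining $\chi(H)\le 2g(t)\le Ct\log\log t$ for some absolute constant $C>0$ and every $t\ge 3$ (the constant $C$ absorbing the factor of $2$).

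For the third bullet, I would use the Reed--Seymour theorem~\cite{reedseymour}, which asserts $g_f(t)\le 2(t-1)$ for the fractional chromatic number of $K_t$-minor-free graphs, and then apply Proposition~\ref{prop:fractional} to conclude $\chi_f(H)\le 2g_f(t)\le 4(t-1)=4t-4$.

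There is no real obstacle here: the only thing to be slightly careful about is that the known Hadwiger-type results are stated for \emph{graphs}, while the corollary concerns \emph{hypergraphs}, but this transfer is exactly what Theorem~\ref{thm:main} and Proposition~\ref{prop:fractional} accomplish. In particular, one does not need to re-examine how minors behave on hypergraphs in the proof of the corollary itself; that work has already been done in the two preceding results.
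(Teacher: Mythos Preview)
Your proposal is correct and matches the paper's approach exactly: the corollary is stated as an immediate consequence of Theorem~\ref{thm:main} and Proposition~\ref{prop:fractional} combined with the cited bounds $g(t)=t-1$ for $t\le 6$~\cite{robertson}, $g(t)=O(t\log\log t)$~\cite{del}, and $g_f(t)\le 2(t-1)$~\cite{reedseymour}. No separate proof is given in the paper beyond this observation.
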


Given that $h(t)$ exists for every integer $t \ge 2$, it is tempting to hope or conjecture that Hadwiger's conjecture generalizes one-to-one to hypergraphs, in the sense that $h(t)=t-1$ for every $t \ge 2$. However, a more careful thought shows that this is not the case, and that in fact $h(t) \ge \left\lceil\frac{3}{2}(t-1)\right\rceil$ for every integer $t \ge 2$.

\begin{observation}\label{obs:lower}
For $t \ge 2$ the complete $3$-uniform hypergraph $H=K_{3(t-1)}^{(3)}$ does not contain $K_t$ as a minor and has chromatic number $\chi(H)=\left\lceil\frac{3}{2}(t-1)\right\rceil$. Thus, $h(t) \ge \left\lceil\frac{3}{2}(t-1)\right\rceil$.
\end{observation}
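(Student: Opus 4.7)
The plan is to verify the two assertions separately: computing $\chi(H)$, and proving that $H = K_{3(t-1)}^{(3)}$ contains no $K_t$-minor.

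For the chromatic number, I would observe that a subset $S \subseteq V(H)$ is independent precisely when $|S| \le 2$, since every $3$-element subset of $V(H)$ is a hyperedge of $H$. Consequently any proper coloring of $H$ partitions the $3(t-1)$ vertices into color classes of size at most $2$, which yields $\chi(H) = \lceil 3(t-1)/2 \rceil$ and is realized by any partition into pairs (with one leftover singleton when $3(t-1)$ is odd).

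For the minor claim, I would argue by contradiction and work with a purported $K_t$-minor model $(B_h)_{h \in V(K_t)}$ in $H$. Two structural constraints from the definition of a minor model drive the proof. First, because every hyperedge of $H$ has size exactly $3$, the induced sub-hypergraph $H[B_h]$ contains no hyperedge whenever $|B_h| = 2$; hence connectedness of $B_h$ forces $|B_h| = 1$ (which is vacuously connected) or $|B_h| \ge 3$. Second, for every pair of distinct $h, h' \in V(K_t)$ there must exist a hyperedge $e \subseteq B_h \cup B_{h'}$ meeting both $B_h$ and $B_{h'}$; since $|e| = 3$, this requires $|B_h| + |B_{h'}| \ge 3$, so at most one of the branch-sets can be a singleton.

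The contradiction now comes from a direct counting argument against $|V(H)| = 3(t-1)$. If no branch-set is a singleton, then $\sum_h |B_h| \ge 3t > 3(t-1)$, impossible. If exactly one branch-set is a singleton and the other $t-1$ branch-sets have size at least $3$, then $\sum_h |B_h| \ge 1 + 3(t-1) = 3t - 2 > 3(t-1)$, again impossible. Hence no $K_t$-minor model exists. The only point requiring some care is that singletons must not be excluded outright (they are vacuously connected); the pairwise size constraint is what limits their number to at most one, and this is precisely what makes the counting argument tight.
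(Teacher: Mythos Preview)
Your proof is correct and essentially identical to the paper's: both compute $\chi(H)$ from the observation that independent sets have size at most $2$, and both exclude a $K_t$-minor by noting that branch-sets cannot have size $2$, at most one can be a singleton, and then deriving the contradiction $\sum_i |B_i| \ge 3(t-1)+1 > |V(H)|$. The only cosmetic difference is that you split the final count into two cases while the paper writes the single inequality directly.
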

\begin{proof}
A set of vertices is independent in $H$ if and only if it has size at most $2$, which immediately yields $\chi(H)=\left\lceil\frac{3}{2}(t-1)\right\rceil$. Next suppose towards a contradiction that $H$ contains $K_t$ as a minor. Then $H$ must contain a $K_t$-minor model consisting of $t$ disjoint and connected branch-sets $(B_i)_{i=1}^{t}$, such that for every distinct $i, j \in [t]$ there is a hyperedge $e$ of $H$ contained in $B_i \cup B_j$ with $e \cap B_i \neq \emptyset \neq e \cap B_j$. Note that no branch-set $B_i$ can be of size $2$, since no such set is connected in $H$. Furthermore, there can be at most one branch-set of size $1$: If there were distinct $i, j$ with $|B_i|=|B_j|=1$, then $B_i \cup B_j$ would be too small to host a hyperedge of $H$. Consequently $$3(t-1)=|V(H)|\ge \sum_{i=1}^{t}{|B_i|} \ge 3(t-1)+1,$$ a contradiction, and this concludes the proof.
\end{proof}

Despite a significant effort, we were not able to find examples providing better lower bounds on $h(t)$ than the one given by the previous proposition. 
This finally leads us towards a hypergraph analogue of Hadwiger's conjecture as follows:

\begin{conjecture}\label{con:ours}
For every integer $t \ge 2$ we have $h(t)=\left\lceil\frac{3}{2}(t-1)\right\rceil$. In other words, if a hypergraph $H$ does not contain $K_t$ as a minor, then $\chi(H) \le \left\lceil\frac{3}{2}(t-1)\right\rceil$. 
\end{conjecture}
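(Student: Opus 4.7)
The plan is to attempt the conjecture by combining a reduction to Sperner critical hypergraphs with a case analysis based on the independence number. By Observation~\ref{obs:subsperner}, any minimum counterexample $H$ may be replaced by $\text{min}(H)$, so I would assume $H$ is Sperner and $\left(\left\lceil\frac{3}{2}(t-1)\right\rceil + 1\right)$-color-critical; it then suffices to exhibit a $K_t$-minor model in $H$.

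First I would dispose of the extremal case $\alpha(H) \le 2$, which corresponds most tightly to Observation~\ref{obs:lower}. Here $\chi(H) \ge |V(H)|/\alpha(H) \ge |V(H)|/2$ forces $|V(H)| \ge 3(t-1) + 1$, and I would try to leverage this excess together with the fact that every triple of vertices contains a (size $2$ or $3$) hyperedge in order to greedily build a $K_t$-minor whose branch-sets are singletons or graph-edges. The intuition is that a singleton branch costs one vertex while a size-$2$ branch saves one, so on at least $3(t-1)+1$ vertices one should be able to produce $t$ branch-sets -- at most one of which is a singleton -- with pairwise connecting hyperedges (possibly size-$3$ edges intersecting both pairs). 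A Hall-type matching argument on the graph of size-$2$ hyperedges, combined with the triple-covering property, looks like the right tool.

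For the complementary range $\alpha(H) \ge 3$ I would try induction on $t$: pick an independent set $I$ of size $3$, apply the conjecture to $H - I$ to either color it with $\left\lceil\frac{3}{2}(t-1)\right\rceil - 1$ colors (then extend by a new color on $I$) or extract a $K_{t-1}$-minor whose branch-sets can be absorbed into $I$ using the hyperedges incident to $I$. An alternative avenue is to sharpen Theorem~\ref{thm:main} directly: its factor of $2$ arises from assigning two representative vertices per hyperedge to form an auxiliary graph, and a more refined -- possibly fractional or probabilistic -- choice of representatives that exploits partial connectedness within would-be branch-sets might replace $2g(t)$ by $\frac{3}{2}g(t)$, yielding $h(t) \le \left\lceil\frac{3}{2}(t-1)\right\rceil$ conditionally on Hadwiger's conjecture.

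The main obstacle is that the tight lower bound in Observation~\ref{obs:lower} is attained by a $3$-uniform hypergraph, so any proof must treat size-$3$ hyperedges in a genuinely different way from graph edges: one cannot afford to assign each size-$3$ minimal hyperedge a single representative pair without squandering the gain from its third vertex. A successful approach therefore has to understand when size-$3$ (and larger) minimal hyperedges can be partially absorbed into branch-sets, and the induction step for $\alpha(H) \ge 3$ needs delicate control over how hyperedges crossing between $I$ and $V(H)\setminus I$ interact with an inductively found $K_{t-1}$-minor -- a difficulty which is genuinely new beyond Hadwiger's conjecture itself, and which is presumably the reason that in this paper only the case $\alpha(H) \le 2$ is settled in full.
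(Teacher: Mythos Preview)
The statement you are attempting is Conjecture~\ref{con:ours}, which the paper explicitly leaves open; there is no proof in the paper to compare against. What the paper \emph{does} prove is the special case $\alpha(H)\le 2$ (Theorem~\ref{thm:indep2}), so the relevant comparison is between your sketch for that case and the paper's actual argument.

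Your treatment of the case $\alpha(H)\le 2$ contains a concrete error. The inequality $\chi(H)\ge |V(H)|/\alpha(H)$ gives an \emph{upper} bound $|V(H)|\le 2\chi(H)$, not a lower bound; it cannot force $|V(H)|\ge 3(t-1)+1$. In fact the paper uses exactly the opposite direction: Lemma~\ref{lemma:indep2} shows that any hypergraph with $\alpha(H)\le 2$ on $n$ vertices contains a $K_{\lceil n/3\rceil}$-minor, so a $K_t$-minor-free $H$ has at most $3(t-1)$ vertices. This upper bound, combined with $k$-criticality for $k=\lceil\tfrac{3}{2}(t-1)\rceil+1$, places $H$ in the range $n\le 2k-2$ where the Stiebitz--Storch--Toft decomposition (Theorem~\ref{thm:decompose}) applies, yielding $H=H_1+H_2$; the proof then finishes by a careful parity-sensitive induction on $t$ via the summands. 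Your proposed greedy construction of branch-sets from an abundance of vertices is therefore based on a false premise and would not go through; the paper's route is genuinely different and relies on an external structural theorem you do not invoke.

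For $\alpha(H)\ge 3$, your plan is a reasonable heuristic but not a proof: removing an independent triple $I$ drops $\chi$ by at most one, but there is no mechanism ensuring that a $K_{t-1}$-minor in $H-I$ (should one exist) can be augmented to a $K_t$-minor using $I$, since $I$ spans no hyperedge and the hyperedges meeting $I$ need not hit the existing branch-sets in any controlled way. The alternative idea of sharpening the factor $2$ in Theorem~\ref{thm:main} to $3/2$ would, even if it worked, only yield the conjecture conditionally on Hadwiger's conjecture for graphs, which is itself open. As you yourself note at the end, these are precisely the obstacles that keep Conjecture~\ref{con:ours} open in the paper.
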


Intriguingly,Conjecture~\ref{con:ours} remains open even for $K_3$-minor-free hypergraphs. The best upper bound on the chromatic number of $K_3$-minor free hypergraphs we are aware of is $4$, provided by Corollary~\ref{smallbounds}. 

\begin{problem}
Is every $K_3$-minor-free hypergraph $3$-colorable?
\end{problem}

We remark that independently of our work, van der Zypen~\cite{vanderZypen} has stated an extension of Hadwiger's conjecture to hypergraphs. However, his alternative conjecture uses a much less restricted version of hypergraph minors, hence leading to a more restricted class of hypergraphs with no $K_t$-minor. For instance, according to the definition in~\cite{vanderZypen}, every hypergraph consisting of a single hyperedge on $t$ vertices would contain $K_t$ as a minor. An argument outlined in~\cite{vanderZypen2} shows that for \emph{finite} values of $t$ van der Zypen's conjecture is a consequence of the ordinary Hadwiger's conjecture for graphs. Since our focus in this paper is on finite hypergraphs, we will not discuss this variant in more detail here.

As additional evidence for our Conjecture~\ref{con:ours} we verify it for all hypergraphs without $3$-vertex independent sets. This is a natural special case to look at, due to the considerable attention that the special case of Hadwiger's conjecture for graphs with independence number $2$ has received in the past, see e.g.~\cite{blasiak,bosse,chudnovsky,duchetmeyniel,furedi,kriesell,plummer}.

\begin{theorem}\label{thm:indep2}
Let $t \ge 2$ be an integer, and let $H$ be a hypergraph such that $\alpha(H) \le 2$. If $H$ does not contain $K_t$ as a minor, then $\chi(H) \le \left\lceil\frac{3}{2}(t-1)\right\rceil$. 
\end{theorem}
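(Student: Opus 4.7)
The plan is to prove the contrapositive by showing that any hypergraph $H$ with $\alpha(H)\le 2$ and $\chi(H)=k$ contains a $K_t$-minor for $t=\lfloor(2k+1)/3\rfloor$, the largest integer with $\lceil 3(t-1)/2\rceil<k$. By Observation~\ref{obs:subsperner} I may assume $H$ is Sperner. A short argument then shows every hyperedge of $H$ has size at most $3$: a hyperedge $e$ of size $\geq 4$ would admit no smaller hyperedge by Sperner, yet each $3$-subset of $e$ must contain a hyperedge by $\alpha(H)\le 2$, a contradiction. Letting $\Gamma$ denote the graph of $2$-element hyperedges of $H$ and $\bar{\Gamma}$ its complement, the $3$-element hyperedges of $H$ are then precisely the triangles of $\bar{\Gamma}$, and $\chi(H)=n-\mu(\bar{\Gamma})$, where $n=|V(H)|$ and $\mu$ denotes the matching number.

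The minor model will be built from three kinds of connected branch-sets: singletons, edges of $\Gamma$ (size-$2$ branch-sets, connected since they are $2$-hyperedges), and triangles of $\bar{\Gamma}$ (size-$3$ branch-sets, connected since they are $3$-hyperedges). A direct case analysis shows that any two triangle branch-sets are automatically joined by a hyperedge inside their union: either some crossing pair lies in $\Gamma$ and is a $2$-hyperedge, or all crossing pairs lie in $\bar{\Gamma}$, in which case any triple spanning both sides is itself a triangle of $\bar{\Gamma}$ and hence a $3$-hyperedge. Similarly, any singleton is joined to any triangle branch-set. The only substantive constraint is that two singleton branch-sets $\{u\},\{v\}$ require $uv\in\Gamma$, so the collection of singleton vertices must form an independent set of $\bar{\Gamma}$.

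The proof thus reduces to a graph-theoretic packing claim about $G:=\bar{\Gamma}$: find an independent set $S\subseteq V(G)$ and a packing of vertex-disjoint triangles of $G-S$ whose total count is at least $\lfloor(2(n-\mu(G))+1)/3\rfloor$. I would attempt this by induction on $n$. When $G$ contains a triangle $T$ whose removal decreases $\mu$ by at least $2$, deleting $T$ and invoking induction on $G-T$ suffices. When no such triangle exists, I would exploit the Gallai--Edmonds decomposition $V(G)=A\cup C\cup D$: the unmatched vertices in any maximum matching form an independent set of $G$ (contributing directly to $S$), while the factor-critical components of $D$ allow for producing additional triangles or singletons in a balanced way. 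In the extreme case when $G$ itself is triangle-free (equivalently, $H$ is a graph with $\alpha(H)\le 2$), the claim reduces to a bound on $\alpha(G)$ versus $\mu(G)$ and can be completed via known partial results on Hadwiger's conjecture for graphs with independence number $2$, such as Duchet--Meyniel's theorem.

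The main obstacle is the tight constant in the combinatorial claim: the extremal hypergraph $K_{3(t-1)}^{(3)}$ of Observation~\ref{obs:lower} achieves equality, so the induction allows no slack. The delicate case is when every triangle of $G$ removes at most one matching edge; here one must carefully balance the triangle choice against the number of available singletons in the leftover structure, most likely by strengthening the inductive hypothesis to track the Gallai--Edmonds deficiency $q-|A|$ alongside $n$ and $\mu$.
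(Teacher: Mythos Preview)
Your reduction to the packing statement is where the argument breaks. Take $G=\bar\Gamma$ to be the disjoint union of two triangles. Then $n=6$ and $\mu(G)=2$, so your target is $|S|+|\mathcal T|\ge\lfloor(2\cdot4+1)/3\rfloor=3$. But $\alpha(G)=2$, the only triangles of $G$ are the two components, and each vertex placed in $S$ destroys one of them; one checks that $\max(|S|+|\mathcal T|)=2$. So the packing claim is false as stated. Translating back, the corresponding hypergraph $H$ (with $\Gamma=K_{3,3}$ on parts $\{a,b,c\},\{d,e,f\}$ together with the two $3$-edges $\{a,b,c\}$ and $\{d,e,f\}$) has $\chi(H)=4$ and does contain $K_3$, indeed $K_4$, as a minor---but only via branch-sets such as $\{a\},\{d\},\{b,e\},\{c,f\}$, i.e.\ using precisely the $\Gamma$-edge branch-sets that you list in your taxonomy and then drop when passing to the packing claim. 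Once $\Gamma$-edges are retained the join conditions are no longer automatic (two $\Gamma$-edges $\{u,v\},\{x,y\}$ are joined only if some crossing pair lies in $\Gamma$; a $\Gamma$-edge meets a singleton $\{w\}$ only if $w$ has a $\Gamma$-neighbour in it), so the problem no longer reduces to ``independent set plus triangle packing in $G$'', and the Gallai--Edmonds induction you sketch is aimed at the wrong target.

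For comparison, the paper proceeds quite differently and never tries to build the model directly. After passing to a minimum counterexample and showing it is $k$-critical with $k=\lceil\tfrac32(t-1)\rceil+1$, it uses Lemma~\ref{lemma:indep2} (the $K_{\lceil n/3\rceil}$-minor bound, which is the analogue of Duchet--Meyniel and is all that survives of your packing idea) only to force $n\le 2k-2$. It then invokes an external structural theorem of Stiebitz, Storch and Toft: every $k$-critical hypergraph on at most $2k-2$ vertices is a Dirac sum $H_1+H_2$. The conclusion follows by playing $\chi(H_i)$ off against the largest clique minors in $H_i$ and in $H_i-x$ via minimality of $t$, together with a short parity case analysis on $t_1,t_2$. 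No explicit $K_t$-model is ever assembled from a packing.
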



\paragraph{\textbf{Applications.}} We present three simple examples of how our bounds for coloring $K_t$-minor free hypergraphs can be applied to produce new or recover known results for coloring graphs and directed graphs with excluded minor-like substructures. We believe that more applications of a similar flavour will be found in the future.

Our first two applications address the following natural question. While Hadwiger's conjecture guarantees the existence of a $K_t$-minor model with connected branch-sets in graphs with high chromatic number, in principle these are not very dense structures, as every branch-set could span a tree only. However, intuitively, graphs of high chromatic number should also contain $K_t$-minor models with ``richer'' branch-sets (i.e., denser, of higher connectivity, etc). Our first result in this direction confirms this intuition by proving that a very moderate bound on the chromatic number suffices to guarantee high edge-connectivity in the branch-sets. 

\begin{theorem}\label{thm:kconn}
There exists an absolute constant $C>0$ such that for every pair of integers $k \ge 1, t \ge 3$ every graph $G$ satisfying $\chi(G)>Ckt\log\log t$ contains a $K_t$-minor model with branch-sets $(B_i)_{i=1}^{t}$, such that for each $i \in [t]$ the induced subgraph $G[B_i]$ is $k$-edge-connected. Furthermore, for any distinct $i, j \in [t]$ there are $k$ distinct edges in $G$ connecting $B_i$ and $B_j$.
\end{theorem}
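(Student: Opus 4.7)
The plan is to reduce Theorem~\ref{thm:kconn} to Theorem~\ref{thm:main} via an auxiliary hypergraph construction. Given $G$ and $k$, let $H=H(G,k)$ be the hypergraph on vertex-set $V(G)$ whose hyperedges are exactly the subsets $S\subseteq V(G)$ with $|S|\ge 2$ for which the induced subgraph $G[S]$ is $k$-edge-connected. The pivotal insight, and what I expect to be the main conceptual step of the proof, is that hypergraph connectedness in $H$ is designed to capture $k$-edge-connectedness in $G$. Indeed, suppose $(B_i)_{i=1}^{t}$ is a $K_t$-minor model in $H$. For each $i\in[t]$ and each partition of $B_i$ into two non-empty parts $X,Y$, the connectedness of $B_i$ in $H$ furnishes a hyperedge $e\subseteq B_i$ of $H$ with $e\cap X\ne\emptyset\ne e\cap Y$; since $G[e]$ is $k$-edge-connected by construction, the partition $(e\cap X,e\cap Y)$ of $V(G[e])$ is crossed by at least $k$ edges of $G[e]$, and these are also edges of $G$ between $X$ and $Y$. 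Thus $G[B_i]$ is $k$-edge-connected. An analogous argument applied to the hyperedge $e\subseteq B_i\cup B_j$ of $H$ witnessing the adjacency between $B_i$ and $B_j$ produces at least $k$ distinct edges of $G$ between these two branch-sets.

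To exploit this, I need a lower bound on $\chi(H)$ in terms of $\chi(G)$. A color class $I$ of a proper coloring of $H$ is by definition an independent set of $H$, meaning $G[I]$ contains no $k$-edge-connected subgraph on at least two vertices. A straightforward induction on $|V(F)|$---at each step cutting $F$ along an edge-cut of size less than $k$ whose existence is guaranteed by the assumption---shows $|E(F)|\le(k-1)(|V(F)|-1)$ for every subgraph $F$ of $G[I]$. Consequently $G[I]$ is $(2k-3)$-degenerate (or edgeless, when $k=1$) and so has chromatic number at most $\max\{1,2k-2\}\le 2k$. Applying this to each of the $\chi(H)$ color classes with disjoint palettes of $2k$ colors, we obtain $\chi(G)\le 2k\cdot\chi(H)$.

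Combining the two steps with Theorem~\ref{thm:main} and the Delcourt--Postle bound $g(t)=O(t\log\log t)$ from~\cite{del}, I choose the constant $C$ large enough (namely, exceeding four times the implicit constant in the bound on $g(t)$) so that every graph $G$ with $\chi(G)>Ckt\log\log t$ satisfies $\chi(H)>2g(t)$. Theorem~\ref{thm:main} then produces a $K_t$-minor of $H$, which the first paragraph converts into the desired $K_t$-minor of $G$ with $k$-edge-connected branch-sets and at least $k$ edges between each pair. The only non-routine step is the choice of hyperedges of $H$ as the $k$-edge-connected induced subgraphs of $G$; everything else is an elementary counting and degeneracy argument together with a direct application of Theorem~\ref{thm:main}.
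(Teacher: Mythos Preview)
Your proposal is correct and follows essentially the same approach as the paper: define the auxiliary hypergraph $H$ whose hyperedges are the vertex-sets of $k$-edge-connected induced subgraphs of $G$, show that connectedness in $H$ forces $k$-edge-connectedness in $G$, bound $\chi(G)$ in terms of $\chi(H)$, and invoke Theorem~\ref{thm:main}. The only minor difference is in bounding $\chi(G[I])$ for an independent set $I$ of $H$: you use a degeneracy argument to get $\chi(G[I])\le 2k-2$, whereas the paper observes that a $(k+1)$-color-critical subgraph would already be $k$-edge-connected, yielding the sharper $\chi(G[I])\le k$; this discrepancy is absorbed into the constant $C$ and does not affect the argument.
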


In our next application of Theorem~\ref{thm:main} we consider another way of strengthening the connectivity requirement for the branch-sets of a complete minor, by requiring that distinct vertices in the same branch-set can be connected by a path of any desired length modulo $q$. For $q=2$, this is somewhat reminiscent of (but not equivalent to) the recently popular notion of \emph{odd minors}. The interested reader may consult e.g.~\cite{geelen} for a definition and background.

\begin{definition}
Let $q \ge 2$ be an integer and $G$ a graph. We say that $G$ is \emph{modulo $q$-connected} if for every pair of distinct vertices $u, v \in V(G)$ and every residue $r \in \{0,1,\ldots,q-1\}$ there exists a path $P$ in $G$ with endpoints $u, v$ and length $\ell(P) \equiv_q r$. 
\end{definition}

\begin{theorem}\label{thm:qconn}
There exists an absolute constant $c>0$ such that for every pair of integers $q\ge 2$, $t \ge 3$ every graph $G$ satisfying $\chi(G)>Cqt\log\log t$ contains a $K_t$-minor model with branch-sets $(B_i)_{i=1}^{t}$, such that for each $i \in\{1,\ldots,t\}$ the induced subgraph $G[B_i]$ is modulo-$q$-connected. 
\end{theorem}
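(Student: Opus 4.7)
The plan is to mirror the proof strategy of Theorem~\ref{thm:kconn}: construct a hypergraph $H$ on $V(G)$ whose hypergraph $K_t$-minor will yield the desired $K_t$-minor model in $G$, and then apply Theorem~\ref{thm:main} to locate that minor. Concretely, I would take $H$ to be the hypergraph on $V(G)$ whose hyperedges are all subsets $e \subseteq V(G)$ for which $G[e]$ is modulo-$q$-connected.

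The core step is to prove the coloring inequality $\chi(G) \le q \cdot \chi(H)$. Given a proper coloring of $H$, every color class $S$ satisfies that no subset of $S$ induces a modulo-$q$-connected subgraph of $G$. The auxiliary object is the $q$-fold cyclic cover $\tilde{G}_q$ with vertex set $V(G) \times \ZZ/q\ZZ$ and edges $\{(u,i),(v,i{+}1)\}$ for every $uv \in E(G)$ and $i \in \ZZ/q\ZZ$, together with the natural projection $\pi: V(\tilde G_q) \to V(G)$. For each connected component $C$ of $G[S]$, I would argue that the preimage $\pi^{-1}(V(C))$ is disconnected in $\tilde{G}_q$: otherwise, walks of every residue modulo $q$ exist between every pair of vertices of $C$, and these can be refined to paths of the same residue inside the connected graph $C$, contradicting the choice of $S$. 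Once $\pi^{-1}(V(C))$ splits, fixing any component $D$ and assigning to each $v \in V(C)$ the set $\{i : (v,i) \in D\} \subseteq \ZZ/q\ZZ$ gives a proper $m$-coloring of $C$, where $m \le q$ is the number of components of $\pi^{-1}(V(C))$ (the assignment is a homomorphism into the $m$-cycle induced by the $\ZZ/q\ZZ$ deck action on components). Taking the disjoint union of such $q$-colorings across components and color classes yields $\chi(G) \le q\chi(H)$.

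Combining this with Theorem~\ref{thm:main} and the Delcourt--Postle bound $g(t) = O(t\log\log t)$ produces an absolute constant $c>0$ such that $\chi(G) > cqt\log\log t$ forces $\chi(H) > 2g(t)$, and hence $H$ contains a $K_t$-minor model $(B_i)_{i=1}^t$. It remains to verify that each $G[B_i]$ is modulo-$q$-connected. For this I would use closure of the class of modulo-$q$-connected vertex sets under vertex-overlapping unions: if $A, B \subseteq V(G)$ are modulo-$q$-connected and share some vertex $w$, then for $u \in A$, $v \in B$ and residue $r$ a $u$--$v$ walk of residue $r$ in $G[A \cup B]$ is obtained by concatenating a $u$--$w$ path in $G[A]$ with a $w$--$v$ path in $G[B]$ of complementary lengths (both available by hypothesis), and a short path-extraction argument converts this walk into a $u$--$v$ path of the same residue in $G[A \cup B]$. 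Since $B_i$ is connected in $H$, it is covered by a chain of hyperedges with successive overlaps, and iterating the closure gives that $G[B_i]$ is modulo-$q$-connected.

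The main obstacle I anticipate is the walk-to-path refinement that appears both in the coloring inequality and in the closure of modulo-$q$-connectedness under vertex-overlapping unions: the cyclic cover only natively controls walk lengths modulo $q$, while the paper's definition of modulo-$q$-connectedness is phrased in terms of paths, so one has to exploit the length-flexibility already present in the induced subgraphs involved to guarantee that the walks produced by the cover argument can always be reduced to paths of the same residue modulo $q$.
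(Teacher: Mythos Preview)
Your overall plan---define the hypergraph $H$ whose hyperedges are the vertex sets inducing modulo-$q$-connected subgraphs, prove a coloring inequality of the form $\chi(G)\le f(q)\,\chi(H)$, and then invoke Theorem~\ref{thm:main}---is exactly the paper's. The gap is in your proposed proof of the coloring inequality, and the obstacle you flag at the end is not a removable technicality but a genuine failure of the argument. Your reasoning for why the cover $\pi^{-1}(V(C))$ must be disconnected is that a connected cover yields walks of every residue, which you then want to refine to paths of the same residue. But this refinement is impossible in general: take $C$ to be a single edge $uv$ and $q$ odd. Then $\pi^{-1}(\{u,v\})$ is a single $2q$-cycle, hence connected, while the only $u$--$v$ path in $C$ has length~$1$, so $\{u,v\}$ contains no modulo-$q$-connected subset. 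Thus the implication ``connected cover $\Rightarrow$ some subset of $V(C)$ is modulo-$q$-connected'' is false, and in this situation the cover tells you nothing about $\chi(C)$. More broadly, there is no elementary reason to expect that a graph with no modulo-$q$-connected induced subgraph has chromatic number at most~$q$.

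The paper does not attempt the bound $\chi(G)\le q\,\chi(H)$. Instead it establishes $\chi(G)\le 315q\cdot\chi(H)$ by combining two external theorems: Gir\~{a}o--Narayanan (every graph with $\chi>7k$ contains a $k$-vertex-connected subgraph with $\chi\ge k$) and Chen--Chen--Gao--Hu (every $45q$-vertex-connected graph that is not bipartite after deleting two vertices is modulo-$q$-connected). Together these show that $\chi(G[S])>315q$ forces a modulo-$q$-connected induced subgraph inside $S$, which is what item~(2) of the relevant lemma needs. For the claim that connected branch sets of $H$ induce modulo-$q$-connected subgraphs of $G$, the paper also sidesteps any walk-to-path refinement: rather than gluing two modulo-$q$-connected sets at an arbitrary shared vertex, it takes a \emph{shortest} path $Q$ from $y$ to the first hyperedge $e_1$ inside the union of the remaining hyperedges, guaranteeing that $Q$ meets $e_1$ only at its endpoint $z$; a path of the required residue from $x$ to $z$ inside $G[e_1]$ then concatenates with $Q$ to form an honest path, not merely a walk.
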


In our third and last application, we give a short reproof of the main result from~\cite{tamas} on coloring digraphs with excluded strong complete minors. The research on this topic was initiated by Axenovich, Gir\~{a}o, Snyder and Weber~\cite{axenovich}, and then further addressed by M\'{e}sz\'{a}ros and the author~\cite{tamas}. To state the result, we adopt the following terminology from~\cite{axenovich,tamas}:

For digraphs $D$ and $F$, we say that \emph{$D$ contains $F$ as a strong minor}, if there exist disjoint non-empty subsets $(B_f)_{f \in V(F)}$ of $V(D)$ such that $B_f$ induces a strongly connected subdigraph of $D$ for every $f \in V(F)$ and for every arc $(f_1,f_2)$ in $F$, there is an arc from $B_{f_1}$ to $B_{f_2}$ in $D$. 
The \emph{dichromatic number} $\vec{\chi}(D)$ of a digraph $D$ is the smallest number of colors that can be used to color the vertices of $D$ such that every color class spans an acyclic subdigraph of $D$. 

The following is a main result from~\cite{tamas}. By considering \emph{cycle hypergraphs} of digraphs, we will show that it is an immediate consequence of Theorem~\ref{thm:main}. For $t \ge 1$ we denote by $\bivec{K_t}$ the complete digraph of order $t$ (containing all possible $t(t-1)$ arcs).

\begin{theorem}\label{thm:strong}
If $D$ is a digraph, $t \ge 2$ an integer, and $D$ does not contain $\bivec{K_t}$ as a strong minor, then $D$ has dichromatic number at most $h(t)\le 2g(t)$.
\end{theorem}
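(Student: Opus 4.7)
The plan is to associate to every digraph $D$ its \emph{cycle hypergraph} $H(D)$, with vertex set $V(D)$ and hyperedge set consisting of the vertex sets of all directed cycles of $D$ (note these all have at least two vertices, so we obtain a legitimate hypergraph in the sense of the paper). I will then verify two reductions: (i) $\vec{\chi}(D)=\chi(H(D))$, and (ii) if $H(D)$ contains $K_t$ as a minor, then $D$ contains $\bivec{K_t}$ as a strong minor. Combining these with Theorem~\ref{thm:main} gives $\vec{\chi}(D)=\chi(H(D))\le h(t)\le 2g(t)$ whenever $D$ is $\bivec{K_t}$-strong-minor-free.

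Claim (i) is immediate: a mapping $c:V(D)\to S$ is a proper coloring of $H(D)$ iff no hyperedge of $H(D)$ is monochromatic, iff no color class of $c$ contains a directed cycle of $D$, iff $c$ is an acyclic coloring of $D$.

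For claim (ii), suppose $(B_i)_{i=1}^{t}$ is a $K_t$-minor model in $H(D)$. I want to show that each $D[B_i]$ is strongly connected and that for all $i\ne j$ there are arcs both from $B_i$ to $B_j$ and from $B_j$ to $B_i$. The second property follows directly from the minor model: the hyperedge of $H(D)$ witnessing the pair $\{i,j\}$ is a directed cycle $C\subseteq B_i\cup B_j$ meeting both branch sets; since $C$ must enter and leave each set, it contributes arcs in both directions. For the first property, the case $|B_i|=1$ is trivial, so assume $|B_i|\ge 2$. Connectivity of $H(D)[B_i]$ forces every vertex of $B_i$ to lie in some directed cycle contained in $B_i$. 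Suppose for contradiction that $D[B_i]$ is not strongly connected, and pick a proper nonempty $S\subsetneq B_i$ that is a union of sink strong components of $D[B_i]$, so that no arc of $D[B_i]$ leaves $S$. Then no directed cycle of $D[B_i]$ can straddle the partition $(S, B_i\setminus S)$, which contradicts the connectivity of $H(D)[B_i]$. Hence $D[B_i]$ is strongly connected, as required.

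The only subtle step is the strong-connectivity argument in claim (ii), which I expect to be the main obstacle, but it reduces cleanly to the standard condensation / sink-component argument above. The rest is routine unwinding of definitions, after which Theorem~\ref{thm:main} applies directly to the cycle hypergraph $H(D)$.
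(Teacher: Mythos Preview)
Your proposal is correct and follows essentially the same approach as the paper: define the cycle hypergraph, observe that $\vec{\chi}(D)=\chi(H(D))$, show that a $K_t$-minor in $H(D)$ yields a strong $\bivec{K_t}$-minor in $D$ (via the same partition/strong-component contradiction for the branch sets and the same ``a directed cycle crossing both branch sets gives arcs in both directions'' argument), and then invoke Theorem~\ref{thm:main}. The only cosmetic difference is that you phrase the strong-connectivity contradiction using sink components of the condensation, whereas the paper directly posits a bipartition with no arcs in one direction; these are equivalent.
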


It was raised as an open problem in~\cite{tamas} whether or not every digraph $D$ with no $\bivec{K_t}$ strong minor has dichromatic number at most $t$. Inspired by our lower bound on $h(t)$ from Proposition~\ref{obs:lower}, we will actually show that this is \emph{not} the case: 
\begin{proposition}\label{prop:digraphlower}
For every $t \ge 2$ there is a digraph $D$ with no strong $\bivec{K_t}$-minor and dichromatic number at least $\left\lceil\frac{3}{2}(t-1)\right\rceil$.
\end{proposition}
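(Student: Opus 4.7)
The plan is to reduce to a hypergraph question via the cycle hypergraph construction. Given a digraph $D$, let $C(D)$ denote the hypergraph on vertex set $V(D)$ whose hyperedges are exactly the vertex sets of the directed cycles of $D$. Two standard identifications reduce our task to a hypergraph problem: $\vec{\chi}(D) = \chi(C(D))$ (since the acyclic subsets of $D$ coincide with the independent sets of $C(D)$), and $D$ contains $\bivec{K_t}$ as a strong minor if and only if $C(D)$ contains $K_t$ as a minor (strong connectivity of a subset of $V(D)$ is equivalent to connectivity of the induced subhypergraph of $C(D)$, and arcs in both directions between two branch-sets combine with paths inside them to yield a directed cycle meeting both). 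It therefore suffices to construct a digraph $D_t$ with $\chi(C(D_t)) \ge \lceil \frac{3}{2}(t-1) \rceil$ and such that $C(D_t)$ contains no $K_t$-minor.

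I would take $D_t$ to be the ``blown-up directed $3$-cycle'': three disjoint copies $V_1, V_2, V_3$ of a bidirected complete graph on $t-1$ vertices, together with all arcs from $V_1$ to $V_2$, from $V_2$ to $V_3$, and from $V_3$ to $V_1$; so $|V(D_t)| = 3(t-1)$. Every 3-element subset of $V(D_t)$ contains a directed cycle: either two of the three vertices lie in a common $V_i$ (producing a 2-cycle from the bidirectedness of $V_i$), or the three vertices lie one in each of $V_1, V_2, V_3$ (producing a directed 3-cycle through the cyclic cross-arcs). Thus $\alpha(D_t) = 2$, so $\vec{\chi}(D_t) \ge \lceil |V(D_t)|/\alpha(D_t) \rceil = \lceil \frac{3}{2}(t-1) \rceil$.

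For the strong-minor lower bound, the key structural observation is that any strongly connected subset of $D_t$ either lies inside a single $V_i$ (any subset is strongly connected there, since $V_i$ is bidirected) or intersects all three of $V_1, V_2, V_3$ (the cross-arcs are one-directional, so closing a cycle requires traversing all three copies). Assume for contradiction that $D_t$ contains a strong $\bivec{K_t}$-minor with branch-sets $B_1, \ldots, B_t$. Any two branch-sets of ``single-$V_i$'' type in distinct copies would have arcs only in one direction between them, so all such branch-sets must share a common copy, say $V_{k^\ast}$. Let $s$ denote the number of ``spanning'' branch-sets and $w = t - s$ the number contained in $V_{k^\ast}$. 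Since each spanning branch-set contributes at least one vertex to $V_{k^\ast}$ and the branch-sets are pairwise disjoint, we obtain $|V_{k^\ast}| \ge s + w = t$, contradicting $|V_{k^\ast}| = t - 1$.

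The main obstacle was identifying the right construction: naive attempts (such as the bidirected complete graph on $3(t-1)$ vertices, or substitutions into tournaments of high dichromatic number) either admit $\bivec{K_t}$-minors or fail the chromatic number bound. Once the ``3-cycle blow-up'' is in hand, the two verifications reduce to short counting arguments that mirror the proof of Observation~\ref{obs:lower}.
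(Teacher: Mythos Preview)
Your construction and verification are exactly the paper's: the same blown-up directed $3$-cycle on $3(t-1)$ vertices, the same $\alpha=2$ argument for the dichromatic lower bound, and the same structural dichotomy (a strongly connected branch-set sits in one part or meets all three) followed by a short counting contradiction. Your final count $|V_{k^\ast}|\ge s+w=t$ is a slightly slicker packaging of the paper's case analysis, but the content is identical.

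One remark: the cycle-hypergraph framing you open with is unnecessary for this proposition, and the biconditional you assert (strong $\bivec{K_t}$-minor in $D$ $\Leftrightarrow$ $K_t$-minor in $C(D)$) is stronger than what the paper proves (Lemma~\ref{lemma:stronghyprelation} gives only the $\Leftarrow$ direction). You never actually use the reduction---you argue directly in $D_t$---so this does not affect correctness, but you should either drop the framing or justify the missing implication.
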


\paragraph{\textbf{Structure of the paper.}} In Section~\ref{sec:proofs} we give the proofs of our main results, including Theorem~\ref{thm:main}, Proposition~\ref{prop:fractional} and Theorem~\ref{thm:indep2}. In Section~\ref{sec:applications} we give the formal proofs of the three applications of these main results to graph and hypergraph coloring, including the proofs of Theorem~\ref{thm:kconn}, Theorem~\ref{thm:qconn}, Theorem~\ref{thm:strong} and Proposition~\ref{prop:digraphlower}. 
 
\section{Proofs of Theorem~\ref{thm:main}, Proposition~\ref{thm:main} and Theorem~\ref{thm:indep2}}\label{sec:proofs}

We start by giving the (joint\footnote{We decided to merge the proofs of these results into one, since they are based on largely the same idea.}) proof of Theorem~\ref{thm:main} and Proposition~\ref{prop:fractional}. The idea of the proof is to decompose the hypergraph into connected and $2$-colorable pieces and derive a graph from that partition. 
\begin{proof}[Proof of Theorem~\ref{thm:main} and Proposition~\ref{prop:fractional}]
Let $t \ge 2$ be an integer, and $H$ be a hypergraph with no $K_t$-minor. Our goal is to show that $\chi(H) \le 2g(t)$, and $\chi_f(H) \le 2g_f(t)$. 

W.l.o.g.~we may assume that $H$ is Sperner, for if not then by Observation~\ref{obs:subsperner} the subhypergraph $\text{min}(H)$ is Sperner, also contains no $K_t$-minor and has the same chromatic and fractional chromatic number as $H$.

Let us inductively construct a partition of the vertex-set $V(H)$ into non-empty sets $(X_i)_{i=1}^{n}$ for some number $n$ as follows: For $i \ge 1$, as long as $X_1 \cup \cdots \cup X_{i-1} \neq V(H)$, pick $X_i$ as a subset of $V(H) \setminus (X_1 \cup \cdots \cup X_{i-1})$, such that $X_i$ is inclusion-wise maximal with respect to the following two properties:
\begin{itemize}
\item $H[X_i]$ is connected, and 
\item $\chi(H[X_i]) \le 2$. 
\end{itemize}
Note that such a set always exists, since any singleton set in $V(H) \setminus (X_1 \cup \cdots \cup X_{i-1})$ satisfies both of the above properties. 
We now define a graph $G$ by $V(G):=[n]$ and $$E(G):=\Set{\{i,j\} \in \binom{[n]}{2}\:|\:\text{there exists }e \in E(H) \text{ with }e \subseteq X_i \cup X_j\text{ and }e \cap X_i \neq \emptyset \neq e \cap X_j}.$$

Since $H[X_i]$ is connected for $i=1,\ldots,n$, by contracting (in arbitrary order) all hyperedges of $H$ contained in $H[X_i]$ for some $i \in [n]$, we obtain a hypergraph $\tilde{H}$ on $n$ vertices. It now follows directly from the definition of $G$ that for every edge $\{i,j\} \in E(G)$, the pair consisting of the two contraction vertices of $X_i$ and $X_j$ forms a hyperedge in $\tilde{H}$. Hence, after deleting all edges in $\tilde{H}$ which are not such pairs, we obtain a graph isomorphic to $G$. Thus, $G$ is a minor of $H$. 
Since $H$ does not contain $K_t$ as a minor, the same must be true for $G$, and it follows that $\chi(G) \le g(t)$ and $\chi_f(G) \le g_f(t)$.

\smallskip

\textbf{Claim 1.} Let $e \in E(H)$. If $I(e):=\{i \in [n]|e \cap X_i \neq \emptyset\}$ forms an independent set in $G$, then there exists $j \in [n]$ such that $e \subseteq X_j$.

\begin{proof}
Suppose towards a contradiction that the claim is wrong, and let $e \in E(H)$ be chosen such that $|I(e)|$ is minimized among all hyperedges for which the claim fails. 

Let $j:=\min I(e)$ be the (index-wise) smallest element of $I(e)$. Since $e$ does not satisfy the claim, we have $e \not\subseteq X_j$ and hence $Y:=e \setminus X_j \neq \emptyset$. By minimality of $j$ we have $Y \subseteq X_{j+1} \cup \cdots \cup X_n$. If $I(e)=\{j,j'\}$ for some $j'>j$, then by definition of $I(e)$ we have $e \subseteq X_j \cup X_{j'}$ and $e \cap X_j \neq \emptyset \neq e \cap X_{j'}$. The definition of $G$ now yields $\{j,j'\} \in E(G)$, contradicting the fact that $I(e)$ is an independent set in $G$. Moving on, we may therefore assume that $|I(e)| \ge 3$. Let $I_1, I_2$ be two non-empty disjoint sets such that $I(e) \setminus \{j\}=I_1 \cup I_2$. By definition, $X_j$ is inclusion-wise maximal among the subsets $X$ of $V(H) \setminus (X_1 \cup \cdots \cup X_{j-1})$ for which $H[X]$ is connected and $\chi(H[X]) \le 2$. We will now show that the strict superset $X_j \cup Y$ of $X_j$ also satisfies these two properties, a contradiction which then concludes the proof of Claim 1. 

It remains to show that $H[X_j \cup Y]$ is also connected and $2$-colorable. The connectivity follows directly from the facts that $H[X_j]$ is connected, $Y=e \setminus X_j$ and $e \cap X_j \neq \emptyset$. Let now $c:X_j \rightarrow \{1,2\}$ be a proper $2$-coloring of $H[X_j]$, and extend $c$ to a $2$-coloring $c'$ of $H[X_j \cup Y]$ by putting $c'(y):=1$ for every $y \in Y$ such that $y \in X_i$ for some $i \in I_1$, and $c'(y):=2$ for every $y \in Y$ such that $y \in X_i$ for some $i \in I_2$. Towards a contradiction, suppose that there is a monochromatic hyperedge $f$ in $H[X_j \cup Y]$ with respect to $c'$. Then since $f$ is monochromatic and $f \subseteq X_j \cup Y$, we conclude that $I(f) \subseteq \{j\} \cup I_1$ or $I(f) \subseteq \{j\} \cup I_2$. In each case, $I(f) \subsetneq I(e)$, and thus $I(f)$ is independent in $G$ and strictly smaller than $I(e)$. Now our initial assumption on the minimality of $|I(e)|$ tells us that $f$ satisfies the statement of Claim 1. We know however that this is not the case: $f \subseteq X_j$ would imply that $f$ is a monochromatic edge in the proper $2$-coloring $c$ of $H[X_j]$, which is impossible, and similarly $f \subseteq X_{i}$ for some $i \in [n]\setminus\{j\}$ would imply that $f \subseteq Y=e \setminus X_j$, a contradiction since $H$ is Sperner. All in all, this concludes the argument that $H[X_j \cup Y]$ is both connected and $2$-colorable, concluding the proof of Claim~1.
\end{proof} 
We can now argue that $\chi(H) \le 2\chi(G) \le 2g(t)$ and $\chi_f(H)\le 2\chi_f(G)\le2g_f(t)$, completing the proof of Theorem~\ref{thm:main}. 

To do so, for every $i \in [n]$, fix a proper $2$-coloring $c_i:X_i \rightarrow \{1,2\}$ of $H[X_i]$. 

To verify $\chi(H) \le 2\chi(G)$, consider a proper coloring $c_G:V(G) \rightarrow [\chi(G)]$ of $G$. Let $c_H:V(H) \rightarrow [\chi(G)] \times \{1,2\}$ be defined as $c_H(x):=(c_G(i),c_i(x))$ for every $x \in X_i$, $i \in [n]$. We claim that $c_H$ defines a proper coloring of $H$. Suppose not, then there exists $e \in E(H)$ (w.l.o.g. inclusion-wise minimal) such that all vertices in $e$ receive the same color under $c_H$, say $c_H(x)=(\alpha,\beta)$ for every $x \in e$. This shows that for every $i \in [n]$ such that $e \cap X_i \neq \emptyset$, we have $c_G(i)=\alpha$. Since $c_G$ is a proper coloring of $G$, this means that (with the notation from Claim~1) the set $I(e):=\{i \in [n]|e \cap X_i \neq \emptyset\}$ is independent in $G$. Hence, we may apply Claim~1 to $e$, which yields that there exists $j \in [n]$ such that $e \subseteq X_j$. We then have $c_j(x)=\beta$ for every $x \in e$, contradicting the facts that $e$ is a hyperedge of $H[X_j]$ and that $c_j$ was chosen as a proper coloring of $H[X_j]$. This contradiction shows that $c_H$ is indeed a proper coloring of $H$, yielding $\chi(H) \le 2\chi(G) \le 2g(t)$, as claimed. 

To verify $\chi_f(H) \le 2\chi_f(H)$, we proceed similarly. Denote by $\mathcal{I}(G)$, $\mathcal{I}(H)$ the collections of independent sets in $G$ and $H$, and let $(w_G(I))_{I \in \mathcal{I}(G)}$ be a non-negative weight assignment such that $\sum_{i \in I \in \mathcal{I}(G)}{w_G(I)} \ge 1$ for every $i \in [n]$ and $\sum_{I \in \mathcal{I}(G)}{w_G(I)}=\chi_f(G)$. Note that for every $i \in [n]$, the subsets $c_i^{-1}(1), c_i^{-1}(2)$ of $X_i$ are independent in $H$. Using Claim~1 this implies that for every $I \in \mathcal{I}(G)$, and any string $s \in \{1,2\}^I$, the set 
$$J(I,s):=\bigcup_{i \in I}{c_i^{-1}(s_i)}$$ is independent in $H$. 

We now define a weight assignment $w_H$ for $\mathcal{I}(H)$ by putting 
$w_H(J(I,s)):=2^{1-|I|}w_G(I)$ for every $I \in \mathcal{I}(G)$ and $s \in \{1,2\}^I$, and assigning weight $0$ to all remaining independent sets in $H$. We then have for every $i \in [n]$ and $v \in X_i$:
$$\sum_{v \in J \in \mathcal{I}(H)}{w_H(J)}=\sum_{i \in I \in \mathcal{I}(G)}\sum_{\substack{s \in \{1,2\}^I: \\ s_i=c_i(v)}}{2^{1-|I|}w_G(I)}$$ $$=\sum_{i \in I \in \mathcal{I}(G)}{2^{|I|-1}(2^{1-|I|}w_G(I))}=\sum_{i \in I \in \mathcal{I}(G)}{w_G(I)} \ge 1.$$
Therefore, $w_H$ is a legal weight assignment for $H$, and it follows that $$\chi_f(H)\le \sum_{J \in \mathcal{I}(H)}{w_H(J)}=\sum_{I \in \mathcal{I}(G)}{2^{|I|}(2^{1-|I|}w_G(I))}=2\sum_{I \in \mathcal{I}(G)}{w_G(I)}=2\chi_f(G),$$ as desired.
\end{proof}

We proceed by preparing the proof of Theorem~\ref{thm:indep2}. A crucial ingredient to that proof is the following lemma. We note that its statement restricted to graphs is well-known (compare~\cite{duchetmeyniel}), and that our proof follows similar lines as the one in the case of graphs.

\begin{lemma}\label{lemma:indep2}
Let $H$ be a hypergraph on $n$ vertices satisfying $\alpha(H)\le 2$. Then $H$ contains $K_{\lceil n/3\rceil}$ as a minor.
\end{lemma}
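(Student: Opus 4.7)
My plan is to prove the lemma by induction on $n$, with base case $n\le 3$ trivial, since $\lceil n/3\rceil\le 1$ there and any single vertex yields a $K_1$-minor.

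For the inductive step with $n\ge 4$, I first invoke Observation~\ref{obs:subsperner} to assume without loss of generality that $H$ is Sperner. A short Sperner argument then forces every hyperedge of $H$ to have size at most $3$: if $e\in E(H)$ had $|e|\ge 4$, any three vertices $v_1,v_2,v_3\in e$ would form an independent $3$-set (no proper sub-hyperedge of $e$ can exist by the Sperner property), contradicting $\alpha(H)\le 2$. Let $G$ denote the graph on $V(H)$ whose edges are exactly the $2$-element hyperedges of $H$.

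I split into two cases. If $\alpha(G)\le 2$, I appeal to the graph version of Duchet--Meyniel (\cite{duchetmeyniel}, the graph case of the present lemma): $G$ contains $K_{\lceil n/3\rceil}$ as a minor, and since $E(G)\subseteq E(H)$ this is also a minor of $H$. Otherwise $\alpha(G)\ge 3$, so I can pick three vertices $a,b,c$ that are pairwise non-adjacent in $G$; the hypothesis $\alpha(H)\le 2$ together with the fact that no $2$-subset of $\{a,b,c\}$ is a hyperedge forces $\{a,b,c\}\in E(H)$ (a $3$-hyperedge). I then apply the induction hypothesis to $H-\{a,b,c\}$ (on $n-3$ vertices, still with independence number at most $2$) to obtain a $K_{\lceil n/3\rceil-1}$-minor model $(B_2,\ldots,B_{\lceil n/3\rceil})$, and I set $B_1:=\{a,b,c\}$, which is connected via the hyperedge $\{a,b,c\}$.

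The one nontrivial verification is that $B_1$ is adjacent (in the minor-model sense) to every $B_i$ for $i\ge 2$. For any $v\in B_i$, the triple $\{a,b,v\}$ must contain a hyperedge of $H$; since $\{a,b\}\notin E(H)$ this hyperedge necessarily involves $v$ together with $a$ or $b$, and is therefore contained in $B_1\cup B_i$ and meets both. This adjacency check is the main step I anticipate as requiring care, but the combined force of $\alpha(H)\le 2$ and the Sperner assumption (which is exactly what gives $\{a,b\}\notin E(H)$) makes it go through cleanly. If one prefers a self-contained proof not citing the graph case of Duchet--Meyniel, the alternative is to carry out an extremal argument in the original Duchet--Meyniel style directly on $H$ --- namely, pick a $K_k$-minor model $(B_1,\ldots,B_k)$ with $k$ maximum and then $\sum|B_i|$ maximum, and show $n\le 3k$ --- but this is more delicate in the hypergraph setting, where hyperedges can straddle several branch-sets and where the ``adjacency of $u\in U$ to $B_i$'' is subtler than in graphs.
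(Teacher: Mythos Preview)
Your proof is correct. The reduction to Sperner hypergraphs with edges of size at most $3$, the dichotomy on $\alpha(G)$, the invocation of the graph Duchet--Meyniel theorem in Case~1, and the inductive peeling of a $3$-hyperedge $\{a,b,c\}$ in Case~2 all go through; the adjacency check for $B_1$ versus $B_i$ via the triple $\{a,b,v\}$ is exactly right once $\{a,b\}\notin E(H)$ is secured.

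The paper takes a different, non-inductive route. It calls a $3$-set \emph{nice} if it induces either a single $3$-hyperedge or exactly two graph edges (an induced $P_3$), picks a maximal packing $\mathcal{X}$ of disjoint nice sets, and observes that the remainder $H-U$ (with $U=\bigcup\mathcal{X}$) is a graph with no induced $P_3$ and $\alpha\le 2$, hence a disjoint union of at most two cliques; the branch-sets are then the members of $\mathcal{X}$ together with the singletons of the larger clique. Your inductive argument peels off only the first kind of nice set and outsources the second kind (and the clique remainder) to the graph Duchet--Meyniel theorem as a black box. What the paper's approach buys is self-containment: it reproves the graph case in passing and never invokes~\cite{duchetmeyniel}. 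What your approach buys is brevity and a simpler adjacency verification (you only check the new branch-set $B_1$ against the old ones, whereas the paper must also verify adjacency between two nice triples and between a nice triple and a clique vertex). Both are natural; your final paragraph already anticipates this trade-off.
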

\begin{proof}
For every hypergraph $H$ with $\alpha(H)\le 2$ we may instead consider $\text{min}(H)$. By Observation~\ref{obs:subsperner} this subhypergraph satisfies $\alpha(\text{min}(H))=\alpha(H) \le 2$, is Sperner, and if it contains $K_{\lceil n/3\rceil}$ as a minor then the same is true for $H$. So in the remainder, we assume w.l.o.g. that $H$ is Sperner. Note that this implies that $H$ contains no $r$-edges for any $r >3$, since if it were to contain an $r$-edge $e$, removing any vertex $x \in e$ would result in the independent set $e\setminus\{x\}$ of size $r-1 > 2$ in $H$, which is impossible. 

In the following let us call a subset $X \subseteq V(H)$ of exactly $3$ vertices \emph{nice} if the induced subhypergraph $H[X]$ contains as hyperedges either exactly one hyperedge of size $3$ or exactly two graph edges. Let $\mathcal{X}$ be a maximal collection of pairwise disjoint nice sets in $H$ (here we allow $\mathcal{X}=\emptyset$). Let $U:=\bigcup\mathcal{X}$ denote the set of vertices contained in a member of $\mathcal{X}$. Then by maximality $V(H)\setminus U$ does not contain a nice set. This in particular means that $H-U$ does not contain any hyperedges of size more than $2$, so it is a graph, and also no induced $3$-vertex path. It is well-known (and not hard to verify) that the only graphs with no induced $3$-vertex paths are disjoint unions of complete graphs. Since $\alpha(H-U)\le\alpha(H)\le 2$, this means that $H-U$ is the disjoint union of at most $2$ complete graphs. Let us write $n=a+b$, where $a:=|U|$ and $b:=|V(H)\setminus U|$. From the above we find that there exists a subset $C \subseteq V(H)\setminus U$ of size $|C| \ge \frac{b}{2} \ge\frac{b}{3}$ such that $H[C]$ is a complete graph. 
We now claim that the members of $\mathcal{X}$ together with the singletons in $C$ form the branch-sets of a minor model for a $K_t$ of order $t:=|\mathcal{X}|+|C| \ge \frac{a}{3}+\frac{b}{3}=\frac{n}{3}$ in $H$, which will conclude the proof. 

To see this, note that every member of $\mathcal{X}$ (and trivially every singleton in $C$) induces a connected subhypergraph of $H$. Furthermore, for every nice set $X \in \mathcal{X}$ and every vertex $y$ of $H$ outside $X$, there exists an edge $e \in E(H)$ with $e \subseteq X \cup \{y\}$ and $y \in e$. Indeed, if such an edge would not exist, then it is easy to see that an independent set of size $2$ in $H[X]$ (which exists since $X$ is nice) joined by $y$ would form an independent set of size $3$ in $H$, a contradiction. 

The above implies that for any two distinct members $X, Y \in \mathcal{X}$ there exists a hyperedge in their union intersecting both $X$ and $Y$, and also that for every $X \in \mathcal{X}$ and $c \in C$ there is a hyperedge contained in $X \cup \{c\}$ intersecting both $X$ and $\{c\}$. Since additionally $C$ is a clique, this confirms our above claim that $\mathcal{X}$ together with $\{c\}_{c \in C}$ yields a complete minor in $H$. 

\end{proof}

To prove Theorem~\ref{thm:indep2}, we additionally need a structural result about small color-critical hypergraphs established by Stiebitz, Storch and Toft in~\cite{stiebitz}. We adopt the following terminology from their paper:

\begin{definition}
Given two hypergraphs $H_1=(V_1,E_1)$ and $H_2=(V_2,E_2)$ with disjoint vertex-sets $V_1$ and $V_2$, their \emph{Dirac-sum} $H_1+H_2$ is the hypergraph with the vertex-set $V=V_1 \cup V_2$ and the edge-set $E=E_1 \cup E_2 \cup \{\{x,y\}|x \in V_1, y \in V_2\}$. 
\end{definition}

We may now state their result as follows.

\begin{theorem}[Stiebitz, Storch and Toft~\cite{stiebitz}, Theorem~1]\label{thm:decompose}
Let $H$ be a $k$-color critical hypergraph on at most $2k-2$ vertices. Then there exist two non-empty disjoint subhypergraphs $H_1$ and $H_2$ of $H$ such that $H=H_1+H_2$.
\end{theorem}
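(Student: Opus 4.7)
The plan is to adapt Dirac's 1953 proof of the analogous statement for graphs (every $k$-chromatic graph on at most $2k-2$ vertices is a join of two smaller graphs) to the hypergraph setting.

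The first step is to reformulate the conclusion via an auxiliary graph $F$ on vertex set $V(H)$, defined by placing an edge $\{x,y\}$ in $F$ exactly when either $\{x,y\}$ is not a graph edge of $H$, or there is a hyperedge $e \in E(H)$ of size at least $3$ with $\{x,y\} \subseteq e$. One verifies directly that $H$ admits a Dirac-sum decomposition $H = H_1 + H_2$ if and only if $F$ is disconnected, with $V(H_1), V(H_2)$ chosen as non-trivial unions of connected components of $F$. Indeed, if $F$ is disconnected this way, every cross-pair is a graph edge of $H$ (by the first clause in the definition of $F$) and no hyperedge of $H$ of size $\ge 3$ crosses the partition (by the second clause). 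The theorem thus reduces to showing that $F$ must be disconnected.

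Assuming towards contradiction that $F$ is connected, I would construct a proper $(k-1)$-coloring of $H$, contradicting $k$-criticality. The key structural input is the following blocking consequence of criticality: for every $v \in V(H)$ and every proper $(k-1)$-coloring $c$ of $H - v$, each of the $k-1$ colors must be blocked at $v$, i.e., for every color $\alpha$ there exists a hyperedge $e_\alpha \ni v$ in $H$ with $c(x) = \alpha$ for all $x \in e_\alpha \setminus \{v\}$; otherwise $c$ extends to $v$ via an unblocked color. Since $|V(H - v)| = n - 1 \le 2k-3$, a pigeonhole count on color class sizes shows that at least $2k - 1 - n \ge 1$ color classes of $c$ are singletons. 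The strategy is then to pick an edge $\{u,v\} \in E(F)$ and a critical coloring $c$ of $H - v$, and execute a Kempe-chain-style exchange that frees up a color at $v$: if $\{u,v\}$ is a non-edge of $H$, swap the colors of $u$ and of a suitably chosen singleton class; if instead $\{u,v\}$ sits inside a hyperedge of size $\ge 3$, use the extra slack in that hyperedge to locally recolor. Iterating such adjustments along a spanning tree of $F$, one arrives at the desired $(k-1)$-coloring of $H$.

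The main obstacle will be executing the exchange argument cleanly in the presence of hyperedges of size $\ge 3$. Kempe chains behave well in the graph case because bichromatic connected components admit swaps that preserve propriety. For hypergraphs, however, swapping two colors on a vertex subset can inadvertently monochromatize a large hyperedge that previously spanned both colors. Controlling this will likely require either an inductive peeling argument (strip off one Dirac-summand at a time, reducing to smaller $k$) or a more delicate coordinated choice of coloring tailored to the structure of a spanning tree of $F$. The bound $n \le 2k - 2$ enters essentially to guarantee enough singleton color classes to serve as exchange targets at every step, which is precisely why the statement fails for $n \ge 2k - 1$.
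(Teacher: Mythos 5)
First, note that the paper does not prove this statement at all: it is imported verbatim as Theorem~1 of Stiebitz, Storch and Toft~\cite{stiebitz}, so there is no in-paper argument to compare yours against. What you have written is an attempt to reprove that external result, and it contains a genuine gap at its core.

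Your reduction is fine: the auxiliary graph $F$ (with $\{x,y\}\in E(F)$ iff $\{x,y\}$ is a non-edge of $H$ or lies inside a hyperedge of size $\ge 3$) correctly encodes decomposability, since a Dirac-sum split $H=H[V_1]+H[V_2]$ exists precisely when some bipartition of $V(H)$ is crossed by no $F$-edge. The blocking lemma for critical hypergraphs and the pigeonhole count giving at least $2k-1-n\ge 1$ singleton color classes in any $(k-1)$-coloring of $H-v$ are also correct. But the entire content of the theorem lies in the step you only gesture at: turning ``$F$ is connected'' into a proper $(k-1)$-coloring of $H$ via Kempe-type exchanges. Already in the graph case this is Gallai's theorem on low-order critical graphs, whose proof is a delicate global argument and not a naive iteration of swaps along a spanning tree; in the hypergraph case the difficulty you yourself identify --- that exchanging two colors on a bichromatic component can monochromatize a hyperedge of size $\ge 3$ that previously met both colors --- is precisely the obstruction, and your text resolves it only with ``will likely require either an inductive peeling argument or a more delicate coordinated choice of coloring.'' That is a statement of the problem, not a proof. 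As it stands, no concrete recoloring procedure is defined, no invariant is maintained along the iteration, and no argument is given that the process terminates with a proper coloring, so the proposal does not establish the theorem. Given that the result is a known, nontrivial theorem with a published proof, the appropriate course in this paper is to cite it (as the author does) rather than to leave its proof as a sketch.
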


We remark the following simple property of the Dirac sum for later use:

\begin{observation}\label{obs:chromaticsum}
$\chi(H_1+H_2)=\chi(H_1)+\chi(H_2)$ for all hypergraphs $H_1, H_2$ on disjoint vertex-sets.
\end{observation}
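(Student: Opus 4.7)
The plan is to prove both inequalities in $\chi(H_1+H_2)=\chi(H_1)+\chi(H_2)$ separately. For the upper bound, I would fix proper colorings $c_1$ of $H_1$ and $c_2$ of $H_2$ using \emph{disjoint} color palettes $S_1, S_2$ of sizes $\chi(H_1)$ and $\chi(H_2)$, and define the natural combined coloring $c$ on $V_1 \cup V_2$ which agrees with $c_i$ on $V_i$. Checking properness reduces to a trichotomy on the hyperedges of the Dirac sum: hyperedges of $H_1$ are non-monochromatic by properness of $c_1$, hyperedges of $H_2$ by properness of $c_2$, and every added cross graph edge $\{x,y\}$ with $x \in V_1$, $y \in V_2$ is non-monochromatic because $c(x) \in S_1$ and $c(y) \in S_2$, and $S_1 \cap S_2 = \emptyset$.

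For the lower bound $\chi(H_1+H_2) \ge \chi(H_1)+\chi(H_2)$, I would start from an arbitrary proper coloring $c$ of $H_1+H_2$. The key observation is that $c(V_1)\cap c(V_2)=\emptyset$: if some color $\alpha$ appeared at both $x \in V_1$ and $y \in V_2$, then the cross graph edge $\{x,y\}$, which belongs to $E(H_1+H_2)$ by definition of the Dirac sum, would be monochromatic under $c$, a contradiction. Moreover, the restriction $c|_{V_i}$ is itself a proper coloring of $H_i$, since any monochromatic hyperedge of $H_i$ would also be a monochromatic hyperedge of $H_1+H_2$. Hence $|c(V_i)| \ge \chi(H_i)$ for $i \in \{1,2\}$, and by disjointness $\chi(H_1+H_2) \ge |c(V_1)|+|c(V_2)| \ge \chi(H_1)+\chi(H_2)$.

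The argument is entirely routine and merely unpacks the definition of the Dirac sum, so I do not expect any genuine obstacle. The only point worth flagging is the classification of hyperedges of $H_1+H_2$ into those contained in $V_1$, those contained in $V_2$, and the added $V_1$--$V_2$ graph edges; this trichotomy is immediate from the definition but is implicitly used in both directions of the proof.
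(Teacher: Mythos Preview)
Your proof is correct and entirely standard; the paper itself does not supply a proof for this observation, treating it as immediate from the definition of the Dirac sum, so there is nothing to compare against.
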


We are now sufficiently prepared for the proof of Theorem~\ref{thm:indep2}.

\begin{proof}[Proof of Theorem~\ref{thm:indep2}]
Towards a contradiction, suppose the claim is false and let $t \ge 2$ be the smallest integer for which there exists a counterexample. Let $H$ be a non-trivial hypergraph with $|V(H)|+|E(H)|$ minimum such that: 
\begin{itemize}
    \item $\alpha(H) \le 2$
    \item $H$ does not contain $K_t$ as a minor, and
    \item $\chi(H)>\left\lceil \frac{3}{2}(t-1) \right\rceil$.
\end{itemize}
We start with a few observations about $H$. 

\medskip

\textbf{Claim 1.} $H$ is Sperner. 

\begin{proof}[Subproof.]
If $H$ is not Sperner, then $\text{min}(H)$ is a proper subhypergraph of $H$, which by Observation~\ref{obs:subsperner} has the same chromatic and independence number as $H$, and clearly also does not contain $K_t$ as a minor. This is a contradiction to our minimality assumption on $H$.
\end{proof}

\smallskip

\textbf{Claim 2.} $H$ is $k$-color-critical, with $k:=\left\lceil \frac{3}{2}(t-1) \right\rceil+1$. 
\begin{proof}[Subproof]
By our assumptions on $H$ we have $\chi(H) \ge k$. Thus the claim follows if we can show that for every proper subhypergraph $H'\subsetneq H$ we have $\chi(H') \le k-1=\left\lceil \frac{3}{2}(t-1) \right\rceil$. Clearly, this is equivalent to showing that $H-v$ and $H-e$ for every choice of $v \in V(H)$ and $e \in E(H)$ are $\left\lceil \frac{3}{2}(t-1) \right\rceil$-colorable. 
Note for every $v \in V(H)$ that $H-v$ is an induced subhypergraph of $H$, which directly implies that $\alpha(H-v) \le 2$. Trivially, also $H-v$ does not contain $K_t$ as a minor, which means that we must have $\chi(H-v) \le \left\lceil \frac{3}{2}(t-1) \right\rceil$, for otherwise $H-v$ would be a counterexample to the claim of the theorem with $|V(H-v)|+|E(H-v)|<|V(H)|+|E(H)|$, contradicting the minimality of $H$. 

Let now $e \in E(H)$ be arbitrary. To argue that $H-e$ is $\left\lceil\frac{3}{2}(t-1) \right\rceil$-colorable, we have to be a bit more careful: Deleting the hyperedge $e$ could increase the independence number, such that $H-e$ may fall out of the class of hypergraphs with independence number $2$. We avoid this obstacle by instead considering the hypergraph $H':=H/e$, obtained from $H$ by contracting $e$. Then it is not hard to see that $\alpha(H') \le \alpha(H)=2$. Further note that $H'$ as a minor of $H$ cannot contain $K_t$ as a minor. Now the minimality of $H$ as a counterexample and the fact $|V(H')|+|E(H')|<|V(H)|+|E(H)|$ imply that $\chi(H') \le \left\lceil \frac{3}{2}(t-1) \right\rceil$. Thus there exists a proper coloring $c':V(H') \rightarrow \{1,\ldots,\left\lceil \frac{3}{2}(t-1) \right\rceil\}$ of $H'$. Let $c:V(H)\rightarrow \{1,\ldots,\left\lceil \frac{3}{2}(t-1) \right\rceil\}$ be the coloring defined by $c(x):=c'(\phi(x))$ for every $x \in V(H)$, where $\phi:V(H)\rightarrow V(H/e)$ denotes the ``contraction map'' as defined in the preliminaries. We claim that $c$ is a proper coloring of the hypergraph $H \setminus e$. Indeed, consider any edge $f \in E(H)\setminus \{e\}$. Then by Claim~1, we cannot have $f \subseteq e$, and hence the image $\phi(f)$ (by definition of $H/e$) forms a hyperedge in $H/e$. If $f$ were to be monochromatic with respect to the coloring $c$, then its image $\phi(f)$ would be monochromatic with respect to the coloring $c'$ of $H'$. Since this is impossible, we conclude that indeed $H\setminus e$ is properly colored by $c$, proving that $\chi(H\setminus e)\le \left\lceil \frac{3}{2}(t-1) \right\rceil=k-1$ for every $e \in E(H)$. This concludes the proof of Claim~2.
\end{proof}
In the following denote $n:=|V(H)|$. By Lemma~\ref{lemma:indep2} applied to $H$, we find that $H$ contains $K_{\lceil n/3\rceil}$ as a minor, which by our assumptions on $H$ implies that $t \ge \left\lceil \frac{n}{3}\right\rceil+1$. This yields $n \le 3(t-1)=2 \cdot \frac{3}{2}(t-1) \le 2(k-1)=2k-2$. Since by Claim~2 $H$ is $k$-color-critical, we may apply Theorem~\ref{thm:decompose} to $H$. This yields the existence of a pair $(H_1,H_2)$ of non-empty disjoint subhypergraphs of $H$ such that $H=H_1+H_2$. 



In the following, let us denote by $t_1$ and $t_2$ the largest integers such that $H_1$ contains $K_{t_1}$ as a minor, and such that $H_2$ contains $K_{t_2}$ as a minor, respectively. It is easy to see that any sequence of deletions and contractions resulting in the minor of $K_{t_1}$ of $H_1$ and $K_{t_2}$ of $H_2$ can be concatenated to yield a sequence of deletions and contractions in $H$. Performing this sequence of operations is then easily seen to result in the minor $K_{t_1}+K_{t_2}=K_{t_1+t_2}$ of $H$. By our assumptions on $H$, this implies that $t_1+t_2 \le t-1$. 

Since $H_1$ and $H_2$ are induced subhypergraphs of $H$, they satisfy $\alpha(H_1), \alpha(H_2) \le \alpha(H)\le 2$. 

Hence, by our minimality assumption on $t$ at the beginning of this proof, since $t_i+1 \le t_1+t_2<t$ for $i=1,2$, and since $H_i$ does not contain $K_{t_i+1}$ as a minor for $i=1,2$, we find that $\chi(H_i) \le \left\lceil\frac{3((t_i+1)-1)}{2}\right\rceil=\left\lceil\frac{3t_i}{2}\right\rceil$ for $i=1,2$. 

Now pick (arbitrarily) a pair of vertices $x \in V(H_1)$ and $y \in V(H_2)$. We distinguish two cases depending on the behaviour of the largest clique minors in $H_1-x$ and $H_2-y$. 

\medskip

\paragraph{\textbf{Case 1.}} $H_1-x$ contains $K_{t_1}$ as a minor, and $H_2-y$ contains $K_{t_2}$ as a minor. 

Then, repeating the argument from above, we find that $(H_1-x)+(H_2-y)=H-\{x,y\}$ contains $K_{t_1}+K_{t_2}=K_{t_1+t_2}$ as a minor. Also note that every vertex in $H$ is connected by a graph edge to either $x$ or $y$. Therefore, contracting the edge $\{x,y\}$, and then successively performing the deletions and contractions that transform $H-\{x,y\}$ into $K_{t_1+t_2}$, yields the complete graph $K_{t_1+t_2+1}$ as a minor of $H$. Since we assumed that $H$ does not contain $K_t$ as a minor, this implies that $t_1+t_2+1 \le t-1$. We may now use this to bound the chromatic number of $H$ as follows:

$$\chi(H)=\chi(H_1)+\chi(H_2) \le \left\lceil\frac{3t_1}{2}\right\rceil+\left\lceil\frac{3t_2}{2}\right\rceil \le \left(\frac{3t_1}{2}+\frac{1}{2}\right)+\left(\frac{3t_2}{2}+\frac{1}{2}\right)$$
$$=\frac{3(t_1+t_2+1)-1}{2}<\frac{3(t_1+t_2+1)}{2} \le \frac{3(t-1)}{2} \le \left\lceil\frac{3(t-1)}{2}\right\rceil,$$ which is a contradiction to our initial assumption $\chi(H)>\left\lceil\frac{3(t-1)}{2}\right\rceil$ on $H$. Hence, we obtain the desired contradiction, which concludes the proof of the theorem in Case~1. 

\medskip

\paragraph{\textbf{Case 2.}} $H_1-x$ does not contain $K_{t_1}$ as a minor, or $H_2-y$ does not contain $K_{t_2}$ as a minor. Due to symmetry, in the following we may assume without loss of generality that $H_1-x$ does not contain $K_{t_1}$ as a minor.

Then we can obtain a better estimate on the chromatic number of $H_1$: Since $H_1-x$ does not contain $K_{t_1}$ as a minor and since $\alpha(H_1-x)\le \alpha(H_1)\le 2$, by minimality of $t$ we obtain $\chi(H_1-x)\le \left\lceil\frac{3(t_1-1)}{2}\right\rceil$. Since the deletion of a vertex can lower the chromatic number of a hypergraph by at most
one, we conclude:

$$\chi(H)=\chi(H_1)+\chi(H_2)\le (\chi(H_1-x)+1)+\chi(H_2)$$
$$\le \left\lceil\frac{3(t_1-1)}{2}\right\rceil+\left\lceil\frac{3t_2}{2}\right\rceil+1$$

Consider first the case that both $t_1$ and $t_2$ are odd numbers, meaning that both $3t_1$ and $3t_2$ are odd, and both $3(t_1-1)$ and $3(t_2-1)$ are even. Then we may estimate, using the above:

$$\chi(H)\le\frac{3(t_1-1)}{2}+\frac{3t_2+1}{2}+1$$
$$= \frac{3(t_1+t_2)}{2}\le \frac{3(t-1)}{2} \le \left\lceil\frac{3(t-1)}{2}\right\rceil.$$ Again, this yields a contradiction to the assumption $\chi(H)>\left\lceil\frac{3(t-1)}{2}\right\rceil$ and concludes the proof in this case.

Finally, assume that at least one of $t_1$ and $t_2$ is even. Then we may estimate:

$$\chi(H)=\chi(H_1)+\chi(H_2) \le \left\lceil\frac{3t_1}{2}\right\rceil+\left\lceil\frac{3t_2}{2}\right\rceil\le \frac{3t_1}{2}+\frac{3t_2}{2}+\frac{1}{2}=\frac{3(t_1+t_2)}{2}+\frac{1}{2}$$
$$\le \frac{3(t-1)}{2}+\frac{1}{2}<\frac{3(t-1)}{2}+1\le \left\lceil\frac{3(t-1)}{2}\right\rceil +1.$$ This is a contradiction, since our initial assumption on $H$ means that $\chi(H) \ge \left\lceil\frac{3(t-1)}{2}\right\rceil+1$.

This concludes the proof of the theorem also in Case~2.
\end{proof}

\section{Applications}\label{sec:applications}

\subsection{Complete minors with branch-sets of high edge-connectivity}
As a first simple application of Theorem~\ref{thm:main} (or, rather Corollary~\ref{smallbounds}), we want to use it to give a short proof of Theorem~\ref{thm:kconn}. To reach this goal, we associate with every graph $G$ and every integer $k \ge 1$ a hypergraph $H_{\text{conn}}(G,k)$ as follows: Its vertex-set is $V(G)$, and a subset of vertices $e \subseteq V(G)$ of size at least $2$ forms a hyperedge of $H_{\text{conn}}(G,k)$ if and only if the induced subgraph $G[e]$ is $k$-edge-connected. The next lemma collects simple but important properties of this hypergraph.
\begin{lemma}\label{lemma:kconn}
Let $G$ be a graph and $k \ge 1$. Then
\begin{enumerate}
    \item If $W \subseteq V(G)$ is connected in $H_{\text{conn}}(G,k)$, then $G[W]$ is $k$-edge-connected. 
    \item If $\chi(G)>k$ then $H_{\text{conn}}(G,k)$ contains at least one hyperedge. 
    \item $\chi(G)\le k\chi(H_\text{conn}(G,k))$.
\end{enumerate}
\end{lemma}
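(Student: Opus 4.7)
The plan is to handle the three assertions in the given order, with (3) reducing to (2). For part (1), I would verify directly that every non-trivial edge cut of $G[W]$ has size at least $k$. Given any bipartition $W = X \cup Y$ with $X, Y \neq \emptyset$, the partition characterisation of hypergraph connectedness (recalled in the preliminaries) produces a hyperedge $e \subseteq W$ of $H_{\text{conn}}(G,k)$ meeting both $X$ and $Y$. By the definition of $H_{\text{conn}}(G,k)$ the induced subgraph $G[e]$ is $k$-edge-connected, so the cut $(X \cap e, Y \cap e)$ in $G[e]$ contributes at least $k$ edges to the cut $(X,Y)$ in $G[W]$. This already yields (1).

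For part (2), my plan is to invoke the classical theorem (due to Dirac) that every vertex-colour-critical graph $G'$ with $\chi(G')=\ell$ is $(\ell-1)$-edge-connected. Assuming $\chi(G) > k$, I would iteratively delete vertices whose removal does not drop the chromatic number, arriving at an induced $\chi(G)$-vertex-critical subgraph $G' \subseteq G$ with $\chi(G') \ge k+1$. Dirac's theorem then guarantees that $G'$ is $k$-edge-connected, and since $\chi(G') \ge 2$ forces $|V(G')| \ge 2$, the vertex set $V(G')$ is a hyperedge of $H_{\text{conn}}(G,k)$.

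Part (3) I would deduce from (2) by a colour-product argument. Fix an optimal proper colouring of $H_{\text{conn}}(G,k)$ and consider any of its colour classes $C$. Since $C$ is independent in the hypergraph, no subset of $C$ of size at least two induces a $k$-edge-connected subgraph of $G$; equivalently, $G[C]$ has no hyperedge of $H_{\text{conn}}(G[C], k)$. Applying the contrapositive of (2) to $G[C]$ therefore gives $\chi(G[C]) \le k$, so using $k$ fresh colours for each of the $\chi(H_{\text{conn}}(G,k))$ classes produces a proper colouring of $G$ with at most $k \cdot \chi(H_{\text{conn}}(G,k))$ colours. I expect no real obstacle along the way: the only non-routine ingredient is Dirac's edge-connectivity theorem for critical graphs invoked in part (2), which is classical; everything else is a direct translation between the hypergraph and graph pictures together with the partition characterisation of hypergraph connectedness.
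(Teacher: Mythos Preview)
Your proposal is correct and follows essentially the same approach as the paper: the same cut argument for (1), the same appeal to the edge-connectivity of colour-critical graphs for (2), and the identical product-colouring reduction for (3). The only cosmetic difference is in (2), where you pass to a vertex-critical \emph{induced} subgraph while the paper passes to a minimal (hence $(k{+}1)$-critical) subgraph and then observes that the induced subgraph on the same vertex set remains $k$-edge-connected; both variants rest on the same classical fact.
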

\begin{proof}
For the proof, we abbreviate $H:=H_{\text{conn}}(G,k)$.
\begin{enumerate}
    \item Let $W\subseteq V(G)$ be such that $H[W]$ is connected. Suppose towards a contradiction that $G[W]$ is not $k$-edge connected. Then there exists a set $F$ of at most $k-1$ edges in $G[W]$ such that $G[W]-F$ is disconnected. Let $X$ denote the vertex-set of a connected component in $G[W]-F$ and let $Y:=W\setminus X$. Then both $X$ and $Y$ are non-empty, and there are at most $k-1$ edges in $G[W]$ with endpoints in $X$ and $Y$. Since $H[W]$ is connected, there has to exist a hyperedge $e$ in $H$ with $e\subseteq W=X\cup Y$ such that $e \cap X\neq \emptyset \neq e\cap Y$. Then $G[e]$ is a $k$-edge connected induced subgraph of $G[W]$ containing vertices of both $X$ and $Y$. However, since $X$ and $Y$ can be separated in $G[W]$ by deleting at most $k-1$ edges, the same is true for $G[e]$, a contradiction to the fact that $G[e]$ is $k$-edge connected. This contradiction proves the claim, $G[W]$ must indeed also be $k$-edge connected.
    \item Suppose $\chi(G)> k$. Let $G' \subseteq G$ be a minimal subgraph of $G$ with chromatic number greater than $k$. Then clearly $G'$ is $(k+1)$-color critical. It is a well-known result, compare e.g.~\cite{toft}, that every $(k+1)$-color-critical graph is $k$-edge-connected. Hence, $G'$ is $k$-edge-connected. Denoting by $e:=V(G')$ its vertex-set, we easily see that also $G[e]$ must be $k$-edge-connected, that is, $e$ forms a hyperedge of $H$. 
    \item Let $c:V(H)\rightarrow S$ be a proper coloring of $H$ for some color-set $S$ of size $\chi(H)$. Then for every $s \in S$ the color-class $c^{-1}(s)$ forms an independent set in $H$. In other words, $H[c^{-1}(s)]=H_{\text{conn}}(G[c^{-1}(s)],k)$ contains no hyperedges. Using item (2), this implies that $\chi(G[c^{-1}(s)])\le k$ for every $s \in S$. Thus,
    $$\chi(G)\le\sum_{s \in S}{\chi(G[c^{-1}(s)])} \le k|S|=k\chi(H),$$ as desired.
\end{enumerate}
\end{proof}

Using Lemma~\ref{lemma:kconn}, Theorem~\ref{thm:kconn} now falls out as an immediate consequence of our hypergraph result Corollary~\ref{smallbounds}. 
\begin{proof}[Proof of Theorem~\ref{thm:kconn}]
By Corollary~\ref{smallbounds} there exists a constant $c>0$ such that $h(t) \le c t \log \log t$ for $t \ge 3$. Let now $G$ be any given graph, $k \ge 1$ and $t \ge 3$ integers. 

Let $C:=c$ and suppose $\chi(G)>C k t\log\log t$. Then by Lemma~\ref{lemma:kconn} $\chi(H_\text{conn}(G,k)) \ge \frac{1}{k}\chi(G)>c t \log\log t \ge h(t)$. This means that $H_\text{conn}(G,k)$ contains $K_t$ as a minor, i.e., there is a $K_t$-minor model in $H_\text{conn}(G,k)$ with branch-sets $(B_i)_{i=1}^{t}$. Then for every $i \in [t]$, the set $B_i$ is connected in $H_\text{conn}(G,k)$ and thus $G[B_i]$ is $k$-edge-connected by Lemma~\ref{lemma:kconn}, (1). Now consider distinct $i, j \in [k]$. Then by definition of a minor model there exists a hyperedge $e \subseteq B_i \cup B_j$ of $H_\text{conn}(G,k)$ such that $e \cap B_i \neq \emptyset \neq e \cap B_j$. But then, since $G[e]$ is a $k$-edge-connected graph, at least $k$ edges must be spanned between $B_i$ and $B_j$ in $G$, and this proves the claim of the theorem.
\end{proof}

\subsection{Complete minors with modulo $q$-connected branch-sets}
To derive Theorem~\ref{thm:qconn}, we use the following definition of a hypergraph. For any graph $G$ and any number $q \ge 2$, we denote by $H_{\text{mod}}(G,q)$ the hypergraph with vertex-set $V(G)$ and in which a subset $e \subseteq V(G)$ of size at least $2$ is a hyperedge if and only if $G[e]$ is a modulo $q$-connected graph. Similar to Lemma~\ref{lemma:kconn} from the previous section, the following lemma relates the coloring and connectivity properties of $H_{\text{mod}}(G,q)$ and $G$.

\begin{lemma}\label{lemma:qconn}
Let $G$ be a graph and $q \ge 2$. Then
\begin{enumerate}
    \item If $W \subseteq V(G)$ is connected in $H_{\text{mod}}(G,q)$, then $G[W]$ is modulo $q$-connected. 
    \item If $\chi(G)>315q$ then $H_{\text{mod}}(G,q)$ contains at least one hyperedge. 
    \item $\chi(G)\le 315q\cdot\chi(H_\text{conn}(G))$.
\end{enumerate}
\end{lemma}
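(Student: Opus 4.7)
The approach mirrors the three-part structure of Lemma~\ref{lemma:kconn}, since $H_{\text{mod}}(G,q)$ and $H_{\text{conn}}(G,k)$ play structurally parallel roles. Parts (3) and (2) will reduce to known inputs, while part (1) is the more delicate structural statement.

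For part (3), the plan is to copy the argument of Lemma~\ref{lemma:kconn}(3). Given a proper coloring $c$ of $H_{\text{mod}}(G,q)$ using $\chi(H_{\text{mod}}(G,q))$ colors, each color class $c^{-1}(s)$ is independent in $H_{\text{mod}}(G,q)$, which means that $G[c^{-1}(s)]$ contains no induced modulo-$q$-connected subgraph on at least two vertices; part (2) then forces $\chi(G[c^{-1}(s)])\le 315q$, and summing over the color classes yields the claimed bound.

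For part (2), the plan is to invoke a quantitative theorem from the extremal literature relating chromatic number to paths of prescribed residue lengths. The constant $315q$ is specific enough that it must refer to a published bound guaranteeing that every graph $G$ with $\chi(G)>315q$ contains an induced subgraph on at least two vertices that is modulo-$q$-connected (almost certainly a result of Gao--Huo--Liu--Ma type combining high chromatic number with control over cycle residues and connectivity). I would apply such a result directly to produce a hyperedge of $H_{\text{mod}}(G,q)$.

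For part (1), fix distinct $u,v\in W$ and a residue $r\in\{0,\dots,q-1\}$. Since $H_{\text{mod}}(G,q)[W]$ is connected, there is a hyperedge-chain $u=x_0,x_1,\dots,x_k=v$ with $x_{i-1},x_i\in e_i$ and each $G[e_i]$ modulo-$q$-connected. The plan is to induct on the minimum such $k$: the base case $k=1$ is immediate from $G[e_1]\subseteq G[W]$. In the inductive step, decompose $r\equiv r_1+r_2\pmod q$, invoke the inductive hypothesis to obtain a $u$-to-$x_{k-1}$ path of length $\equiv r_1$, and invoke modulo-$q$-connectedness of $G[e_k]$ to obtain an $x_{k-1}$-to-$v$ path of length $\equiv r_2$. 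The main obstacle is that this concatenation is in general only a walk, so one must argue the existence of an honest $u$-$v$ path of the same residue; I expect to handle this by routing the final segment in $G[e_k]$ so that its interior avoids the vertices used by the earlier path (exploiting the flexibility of choosing among many paths of each residue class inside a modulo-$q$-connected graph), or alternatively via a shortest-walk-to-path reduction that preserves length modulo $q$.
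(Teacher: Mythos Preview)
Your plans for parts~(2) and~(3) are essentially right. For~(3), the paper does exactly what you describe. For~(2), the paper does cite external results, but not a single theorem of Gao--Huo--Liu--Ma type: the constant $315=7\cdot 45$ arises by combining the Gir\~{a}o--Narayanan theorem (every graph with $\chi>7k$ contains a $k$-vertex-connected subgraph of chromatic number at least $k$) with a result of Chen--Chen--Gao--Hu (every $45q$-vertex-connected graph is either modulo-$q$-connected or becomes bipartite after deleting at most two vertices); the chromatic-number lower bound on the subgraph rules out the bipartite alternative.

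Part~(1), however, has a genuine gap. Neither of your proposed repairs works. A walk-to-path reduction does \emph{not} preserve length modulo $q$, and modulo-$q$-connectedness only promises one path of each residue class between two given vertices, with no control over which internal vertices it uses, so you cannot force the final segment to avoid the earlier path. The paper's argument is structured differently from your induction. By minimality of $k$, the vertex $x=x_1$ lies outside $e_2\cup\cdots\cup e_{k-1}$. One then takes an ordinary \emph{shortest} path $Q$ (no residue constraint) from $y$ to $e_1$ inside $G[e_2\cup\cdots\cup e_{k-1}]$, with endpoint $z\in e_1$; shortness forces $V(Q)\cap e_1=\{z\}$. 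Only now is modulo-$q$-connectedness invoked, and only once: inside $G[e_1]$, pick a path $R$ from $x$ to $z$ whose length makes $\ell(R)+\ell(Q)\equiv r\pmod q$. Since $V(R)\subseteq e_1$ and $V(Q)\cap e_1=\{z\}$, the concatenation $R\cup Q$ is automatically an honest path. The idea you are missing is to spend the residue flexibility in exactly one hyperedge and arrange, via a shortest-path argument, that the rest of the route meets that hyperedge only at the splice point.
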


In the proof of the above Lemma, we need two statements from the literature, the first relating chromatic number and vertex-connectivity, and the second relating vertex-connectivity and modulo $q$-connectivity of graphs. 
\begin{theorem}[Gir\~{a}o and Narayanan~\cite{girao}]\label{thm:girao}
Let $k \ge 1$ be an integer. Every graph $G$ with $\chi(G)>7k$ contains a subgraph $G'$ such that $\chi(G')\ge k$ and $G'$ is $k$-vertex-connected.
\end{theorem}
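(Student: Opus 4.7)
The plan is a minimum counterexample argument. Suppose for contradiction that the theorem fails for some $k \ge 1$, and let $G$ be a counterexample minimizing $|V(G)|$: $\chi(G) > 7k$, yet $G$ contains no $k$-vertex-connected subgraph of chromatic number $\ge k$. By this minimality, every induced subgraph on strictly fewer vertices has chromatic number $\le 7k$; otherwise it would itself be a smaller counterexample, since any $k$-connected subgraph of chromatic number $\ge k$ inside it would also sit inside $G$. In particular $G$ is vertex-critical, so every vertex has degree at least $7k$. If $G$ is itself $k$-vertex-connected we are done with $G' := G$, so assume $G$ admits a vertex cut $S$ of size $s \le k-1$; let $C_1,\dots,C_m$ ($m \ge 2$) be the components of $G-S$ and set $G_i := G[C_i \cup S]$. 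Each $G_i$ is a proper induced subgraph of $G$ and hence $\chi(G_i) \le 7k$.

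The remaining task is to combine the colorings of the $G_i$'s so as to deduce $\chi(G) \le 7k$, contradicting $\chi(G) > 7k$. A naive gluing --- color $S$ with $s$ fresh distinct colors and color all the $C_i$'s together from a shared palette of at most $\max_i \chi(G[C_i]) \le 7k$ further colors, which is legal because distinct $C_i$'s are pairwise non-adjacent --- only yields $\chi(G) \le s + 7k \le 8k-1$, falling short of the target by roughly $k$. To sharpen this, I would combine three ingredients: first, an induction on $k$ applied inside each $G_i$, which should either extract a $(k-1)$-vertex-connected subgraph of chromatic number $\ge k-1$ (to be upgraded, using the vertices of $S$, into the forbidden $k$-connected subgraph of chromatic number $\ge k$ inside $G$) or else improve the bound to $\chi(G_i) \le 7(k-1)$ on the given piece; second, the fact that vertex-criticality of $G$ forces every vertex of $S$ to have many neighbours in every $C_j$ it borders, which is what makes the upgrade step feasible; and third, a careful choice of the separator $S$ (for example of minimum size) so that Menger's theorem supplies $s$ internally disjoint $S$-paths through each $G_i$.

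The main obstacle is precisely the upgrade step: showing that a $(k-1)$-vertex-connected subgraph of chromatic number $\ge k-1$ found inside some $G_i$ can be promoted, by adjoining a single carefully chosen vertex of $S$, into a $k$-vertex-connected subgraph of chromatic number $\ge k$ inside $G$, thereby contradicting the extremal choice of $G$. This is delicate because adjoining a vertex may fail to increase either the connectivity or the chromatic number, and pinning down the exact constant $7$ requires a tight accounting of the interaction between the separator $S$ and the pieces $G_i$ (for small $k$ the bookkeeping can largely be done by hand, but the general case needs a uniform argument). This is where the Girão--Narayanan proof invests its technical effort, and where I expect most of the work to concentrate.
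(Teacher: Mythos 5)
This statement is not proved in the paper at all: it is imported verbatim from Gir\~{a}o and Narayanan~\cite{girao} and used as a black box in the proof of Lemma~\ref{lemma:qconn}, so there is no in-paper argument to compare yours against. Judged on its own terms, your proposal is not a proof but a plan whose decisive step you explicitly leave open (``the main obstacle is precisely the upgrade step\dots this is where the Gir\~{a}o--Narayanan proof invests its technical effort''). The setup you do carry out is fine --- minimality forces $\chi(G)=7k+1$, vertex-criticality, minimum degree at least $7k$, and a cutset $S$ with $|S|\le k-1$ if $G$ is not itself $k$-connected --- but everything after the correct observation that naive gluing only gives $\chi(G)\le 7k+|S|$ is a wish list rather than an argument.

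Two of the three proposed ingredients do not do what you need. First, the induction on $k$ gives no usable dichotomy: the hypothesis for $k-1$ only says that \emph{if} $\chi(G_i)>7(k-1)$ \emph{then} $G_i$ contains a $(k-1)$-connected subgraph of chromatic number at least $k-1$; it never lets you ``improve the bound to $\chi(G_i)\le 7(k-1)$'' as an alternative, so you are always thrown onto the upgrade step. Second, the upgrade step has no mechanism: if $H'$ is a $(k-1)$-connected subgraph buried inside some component $C_i$ and $v\in S$, then $H'+v$ is $k$-connected only if $v$ has neighbours on both sides of every minimum cutset of $H'$ (a $(k-1)$-cutset $T$ of $H'$ with $v$ having no neighbour in one side of $H'-T$ remains a cutset of $H'+v$), and $\chi(H'+v)\ge\chi(H')+1$ fails unless $v$ dominates a colour-critical part of $H'$; vertex-criticality of $G$ only gives $v$ many neighbours in $G$, not in $H'$. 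This cutset recursion, losing up to $k-1$ colours per separation, is essentially the classical approach and is exactly why earlier results on this problem only achieved superlinear bounds; the linear bound $7k$ is obtained in~\cite{girao} by a genuinely different global argument, not by patching the recursion with a one-vertex upgrade. As it stands the proposal has a genuine gap at its core.
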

We remark that the constant in the previous theorem was recently improved from $7$ to $3+\frac{1}{16}$ by Nguyen~\cite{nguyen}. For simplicity, we stick with the constant $7$ here.
\begin{theorem}[Chen, Chen, Gao and Hu~\cite{chen}]\label{thm:linked}
Let $k$ and $m_1,\ldots,m_k$ be positive integers, and let $G$ be a $45(m_1+\ldots+m_k)$-vertex-connected graph. Then at least one of the following holds: 
\begin{itemize}
    \item There exists a vertex-set $X \subseteq V(G)$ of size $|X| \le 4k-2$ such that $G-X$ is bipartite, or
    \item for every choice of pairwise distinct vertices $x_1,\ldots,x_k,y_1,\ldots,y_k$ in $G$ and for every choice of integers $d_1,\ldots,d_k$ there exist $k$ disjoint paths $P_1,\ldots,P_k$ in $G$ such that $P_i$ has endpoints $x_i$ and $y_i$, and its length is congruent to $d_i$ modulo $2m_i$. 
\end{itemize}
\end{theorem}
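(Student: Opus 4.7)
The plan is to split the argument into two phases: first produce an initial linkage of $k$ pairwise disjoint paths $P_i^{(0)}$ from $x_i$ to $y_i$ using the bulk of the vertex-connectivity, and then iteratively modify each $P_i^{(0)}$, by rerouting through carefully chosen disjoint cycles, so that its length falls into the correct residue class $d_i \pmod{2m_i}$. The bipartite alternative arises as the unavoidable obstruction when the rerouting step cannot supply cycles of both parities near some of the paths.

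First, I would invoke a classical linkage theorem (Thomassen, or Bollob\'{a}s--Thomason, or Thomas--Wollan) to obtain $P_1^{(0)},\dots,P_k^{(0)}$: vertex-connectivity of order $O(k)$ already suffices, and the hypothesized connectivity $45\sum_i m_i$ leaves a substantial ``residual'' highly-connected subgraph available for the subsequent modifications. One has to choose this linkage somewhat carefully (e.g.\ greedily contracting a large highly connected subgraph that one reserves for rerouting) so that after removing the linkage we still have Menger-style flexibility on the rest of $G$.

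Second, the heart of the proof is the rerouting step. Suppose one has a cycle $C$ disjoint from the current linkage apart from two internally disjoint ``attachment paths'' $Q_1,Q_2$ ending on $P_i$ at vertices $u,v$. Replacing the $uv$-subpath of $P_i$ by the detour $Q_1$-around-$C$-back-via-$Q_2$ changes $\ell(P_i)$ by a quantity of the form $\ell(C) + \ell(Q_1) + \ell(Q_2) - \ell(P_i[u,v])$. Iterating this with a collection of cycles whose lengths cover enough residues modulo $2m_i$ then lets us land on any prescribed $d_i$. In particular, if one has access to both an odd cycle and an even cycle (both disjoint from the linkage and attachable to $P_i$), then together with the additional length flexibility coming from varying the attachments one can realize all of $\mathbb{Z}/2m_i\mathbb{Z}$.

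Third, I would set up the dichotomy. If no vertex set $X$ with $|X|\le 4k-2$ makes $G-X$ bipartite, then for every choice of $4k-2$ ``forbidden'' vertices (which we use to absorb the endpoints and sensitive parts of the other paths $P_j$, $j\neq i$) the remaining graph still contains an odd cycle; together with the huge leftover connectivity this lets us extract many pairwise disjoint odd cycles, each with short disjoint attachments to $P_i$ avoiding the other $P_j$'s. Even cycles are abundant via trivial detours. If on the other hand some such $X$ exists, we are done with the first alternative.

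The main obstacle, which is where the constant $45$ and the linear dependence on $\sum_i m_i$ in the hypothesis are truly needed, is the disjointness bookkeeping across all $k$ paths simultaneously: each rerouting of $P_i$ must use cycles and attachment paths that avoid $\bigcup_{j\neq i} P_j$ and also any attachments already used to fix other $P_j$'s residues. A natural accounting allocates roughly $45 m_i$ ``units'' of connectivity to $P_i$, enough to host $O(m_i)$ disjoint cycles-plus-attachments that together generate the full group $\mathbb{Z}/2m_i\mathbb{Z}$. Making this precise, and proving the underlying structural statement that the absence of a $(4k-2)$-vertex bipartite-making set in a highly connected graph yields an ample disjoint supply of odd cycles with controlled attachments, is the technical crux of the argument.
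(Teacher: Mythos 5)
This statement is not proved in the paper at all: it is quoted verbatim as an external result of Chen, Chen, Gao and Hu~\cite{chen}, so there is no in-paper argument to compare yours against. Judged on its own terms, your proposal is a plausible high-level plan (initial linkage, then rerouting through cycles to correct each length modulo $2m_i$, with the near-bipartite case as the obstruction), and this is indeed the general flavour of how such modular-linkage theorems are proved. But it is a plan rather than a proof, and you defer exactly the parts that constitute the theorem.

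Two gaps are concrete enough to name. First, your claim that ``if one has access to both an odd cycle and an even cycle \ldots one can realize all of $\mathbb{Z}/2m_i\mathbb{Z}$'' does not hold as stated: a single detour through a fixed cycle with fixed attachment paths changes $\ell(P_i)$ by one of at most two fixed amounts (the two ways around the cycle), so a bounded number of cycles yields only a bounded set of achievable length shifts, while you must hit all $2m_i$ residues. One genuinely needs on the order of $m_i$ pairwise disjoint adjusting gadgets per path whose increments generate $\mathbb{Z}/2m_i\mathbb{Z}$ (this is where the hypothesis $45(m_1+\cdots+m_k)$ earns its keep), and constructing these disjointly for all $i$ simultaneously is the theorem, not a routine afterthought. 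Second, the structural step ``no set of $4k-2$ vertices makes $G-X$ bipartite, hence there is an ample disjoint supply of odd cycles with short attachments avoiding the other paths'' is asserted without argument, and the specific threshold $4k-2$ is never derived; you yourself flag this as ``the technical crux.'' Since both load-bearing steps are missing, the proposal does not establish the statement; if you need this result, cite~\cite{chen} as the paper does.
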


Setting $k:=1$ in Theorem~\ref{thm:linked}, we obtain the following special case. 
\begin{corollary}\label{cor:modconn}
Let $q \ge 2$ be an integer. Then every $45q$-vertex-connected graph $G$ contains a set $X$ of at most $2$ vertices such that $G-X$ is bipartite, or $G$ is modulo $2q$-connected (and thus also modulo $q$-connected).
\end{corollary}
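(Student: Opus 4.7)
The plan is to apply Theorem~\ref{thm:linked} directly with the parameter choice $k := 1$ and $m_1 := q$. With these values the connectivity hypothesis $45(m_1 + \cdots + m_k) = 45q$ matches the assumption of the corollary exactly, so Theorem~\ref{thm:linked} applies to every $45q$-vertex-connected graph $G$, and it remains only to rephrase the two alternatives in the language of Corollary~\ref{cor:modconn}.

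Next I would read off the two alternatives guaranteed by the theorem. The first alternative produces a vertex-set $X \subseteq V(G)$ of size $|X| \le 4k - 2 = 2$ such that $G - X$ is bipartite, which is exactly the first conclusion of the corollary. The second alternative states that for every pair of distinct vertices $x_1, y_1 \in V(G)$ and every integer $d_1$, there exists a path in $G$ from $x_1$ to $y_1$ whose length is congruent to $d_1$ modulo $2m_1 = 2q$. Letting $d_1$ range over all integers yields paths of every residue modulo $2q$ between any two distinct vertices of $G$, which is precisely the definition of $G$ being modulo $2q$-connected.

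Finally, I would justify the parenthetical remark by the elementary observation that modulo $2q$-connectivity implies modulo $q$-connectivity: given distinct $u, v \in V(G)$ and any residue $r \in \{0, 1, \ldots, q-1\}$, modulo $2q$-connectivity produces a path with endpoints $u, v$ of length $\ell \equiv r \pmod{2q}$, and this $\ell$ also satisfies $\ell \equiv r \pmod{q}$. I do not anticipate any real obstacle here, as the entire corollary is a direct specialization of Theorem~\ref{thm:linked} combined with the trivial refinement of residues from modulus $2q$ to modulus $q$.
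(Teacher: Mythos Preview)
Your proposal is correct and follows exactly the approach of the paper, which simply states that the corollary is obtained by setting $k:=1$ in Theorem~\ref{thm:linked}; you have merely spelled out the substitution $m_1=q$, the resulting bound $4k-2=2$, and the trivial passage from modulo $2q$-connectivity to modulo $q$-connectivity.
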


We are now prepared for the proof of Lemma~\ref{lemma:qconn}.

\begin{proof}
During the proof we use the abbreviation $H=H_{\text{mod}}(G,q)$.
\begin{enumerate}
    \item Let $W \subseteq V(G)$ be connected in $H_\text{mod}(H,q)$. Let a pair of distinct vertices $x,y \in W$ as well as a residue $r\in \{0,1,\ldots,q-1\}$ be given to us. We have to show that there exists a path in $G[W]$ with endpoints $x,y$ and whose length is congruent to $r$ modulo $q$. Since $H[W]$ is connected, for some $k \ge 2$ there exists a sequence $x=x_1,x_2,\ldots,x_k=y$ of vertices in $W$ and hyperedges $e_1,\ldots,e_{k-1}$ of $H[W]$ such that $x_i, x_{i+1} \in e_i$ for all $i \in [k-1]$. We can further assume that $k\ge 2$ is chosen minimal such that a sequence as above exists.
    If $k=2$, then $x$ and $y$ are distinct vertices in the modulo $q$-connected induced subgraph $G[e_1]$ of $G[W]$, and thus can be connected by a path of length $r$ modulo $q$, as desired. 
    So assume $k \ge 3$. Then our minimality assumption on $k$ implies that $x_1 \notin e_2\cup \cdots\cup e_{k-1}$. Note that $H[e_2\cup \cdots\cup e_{k-1}]$ forms a connected subhypergraph of $H$, and hence we may apply Lemma~\ref{lemma:kconn}, (1), to find that $G[e_2\cup \cdots\cup e_{k-1}]$ is a connected subgraph of $G[W]$. Let $Q$ be a shortest path in $G[e_2\cup \cdots\cup e_{k-1}]$ that connects $y=x_k$ to a vertex in $e_1$ (such a path exists, since $x_2 \in e_1 \cap e_2$). Let $z$ denote the other endpoint of $Q$. Then we have $V(Q)\cap e_1=\{z\}$ since $Q$ is shortest. Furthermore, $x\neq z$, since $x \notin e_2\cup \cdots\cup e_{k-1}$. Let $r' \in \{0,1,\ldots,q-1\}$ be such that $r'+\ell(Q) \equiv_q r$. Then we find a path $R$ in $G[e_1]$ connecting $x$ and $z$ of length congruent to $r'$ modulo $q$. It is now apparent that $R \cup Q$ forms a path in $G[W]$ connecting $x$ and $y$ of length $\ell(R)+\ell(Q)\equiv_q r'+\ell(Q) \equiv_q r$. This concludes the proof that $G[W]$ is modulo $q$-connected.
    \item Since $\chi(G)>315q=7 \cdot 45q$, we may apply Theorem~\ref{thm:girao} and find a $45q$-vertex-connected subgraph $G'$ of $G$ with $\chi(G')\ge 45q$. W.l.o.g. we may assume that $G'$ is an induced subgraph of $G$. Applying Corollary~\ref{cor:modconn}, we find that there is a set $X$ of at most $2$ vertices such that $G'-X$ is bipartite, or $G'$ is modulo $q$-connected. However, the first option is not feasible, since $\chi(G'-X) \ge \chi(G')-|X| \ge 45q-2>2$ for every set $X\subseteq V(G')$ of size $2$. Hence, $V(G')$ forms a hyperedge in $H_\text{mod}(G,q)$, proving the assertion.
    \item The proof is analogous to the proof of item (3) of Lemma~\ref{lemma:kconn}, and is thus left out.
\end{enumerate}
\end{proof}

Similar as in the previous subsection, we can now derive Theorem~\ref{thm:qconn} as a direct consequence of Corollary~\ref{smallbounds}. 
\begin{proof}[Proof of Theorem~\ref{thm:qconn}]
Let $c>0$ be as guaranteed by Corollary~\ref{smallbounds} such that $h(t) \le c t \log \log t$ for $t \ge 3$. Let $G$ be any given graph, $q \ge 2$ and $t \ge 3$ integers. Define $C:=315c$ and suppose that $\chi(G)>C q t\log\log t$. Then by Lemma~\ref{lemma:kconn} $\chi(H_\text{mod}(G,q)) \ge \frac{1}{315q}\chi(G)>c t \log\log t \ge h(t)$. Therefore $H_\text{mod}(G,q)$ contains $K_t$ as a minor. Let $(B_i)_{i=1}^{t}$ be corresponding branch-sets of a minor model, then for every $i \in [t]$ the set $B_i$ is connected in $H_\text{mod}(G,q)$ and thus $G[B_i]$ is modulo $q$-connected by Lemma~\ref{lemma:qconn}, (1). For distinct $i, j \in [k]$ we have by definition of a minor model that there is a hyperedge $e \subseteq B_i \cup B_j$ with $e \cap B_i \neq \emptyset \neq e \cap B_j$. Since $G[e]$ is connected, this means there is at least one edge in $G$ between $B_i$ and $B_j$, and hence the set collection $(B_i)_{i=1}^t$ indeed describes a $K_t$-minor model in $G$, as desired.
\end{proof}

\subsection{Strong complete minors in digraphs}

To prove Theorem~\ref{thm:strong}, it will be crucial to relate strong minors in digraphs with the complete minors in their \emph{cycle hypergraphs}. Given a digraph $D$, its \emph{cycle hypergraph} $\mathcal{C}(D)$ has the same vertex-set $V(D)$ as $D$, and a set $e \subseteq V(D)$ forms a hyperedge of $\mathcal{C}(D)$ if and only if there exists a directed cycle $C$ in $D$ whose vertex-set equals $e$. It is clear by definition that a set $W$ of vertices is independent in $\mathcal{C}(D)$ if and only if $D[W]$ is acyclic. This directly yields that $\vec{\chi}(D)=\chi(\mathcal{C}(D))$ for every digraph $D$. The next lemma shows that complete minors in $\mathcal{C}(D)$ induce strong complete minors in $D$.

\begin{lemma}\label{lemma:stronghyprelation}
Let $t \ge 1$ be an integer. If $\mathcal{C}(D)$ contains $K_t$ as a minor, then $D$ contains $\bivec{K_t}$ as a strong minor.
\end{lemma}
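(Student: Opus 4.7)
The proof is constructive and direct: I would take the same collection of branch-sets $(B_i)_{i=1}^t$ witnessing the $K_t$-minor in $\mathcal{C}(D)$ and argue that they already form a strong $\bivec{K_t}$-minor model in $D$. This reduces to two claims: (i) for every $i$, the induced subdigraph $D[B_i]$ is strongly connected, and (ii) for every pair $i \neq j$ there is an arc from $B_i$ to $B_j$ and an arc from $B_j$ to $B_i$ in $D$. The key observation powering both claims is that a hyperedge of $\mathcal{C}(D)$, by definition, is the vertex-set of a directed cycle in $D$, and every directed cycle is itself strongly connected.

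For claim (i), I would use the sequential characterization of connectivity in hypergraphs from the preliminaries. Since $B_i$ is connected in $\mathcal{C}(D)$, for any two distinct $u,v \in B_i$ there exist vertices $u = x_1, x_2, \ldots, x_k = v$ in $B_i$ and hyperedges $e_1, \ldots, e_{k-1}$ of $\mathcal{C}(D)[B_i]$ with $x_\ell, x_{\ell+1} \in e_\ell$ for all $\ell$. Each $e_\ell$ is the vertex-set of some directed cycle $C_\ell$ living entirely inside $B_i$, and since $C_\ell$ is strongly connected one can traverse arcs of $C_\ell$ to go from $x_\ell$ to $x_{\ell+1}$ (and conversely from $x_{\ell+1}$ to $x_\ell$). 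Concatenating these directed walks gives directed walks from $u$ to $v$ and back inside $D[B_i]$, proving strong connectedness.

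For claim (ii), I would fix distinct $i,j$ and take the hyperedge $e \in E(\mathcal{C}(D))$ guaranteed by the minor-model definition, so $e \subseteq B_i \cup B_j$ and $e$ meets both $B_i$ and $B_j$. By definition $e = V(C)$ for some directed cycle $C$ of $D$. Walking along $C$ and looking at consecutive pairs of vertices, since $V(C)$ contains at least one vertex of $B_i$ and at least one vertex of $B_j$, there must exist two consecutive vertices on $C$ lying in different branch-sets, with one in $B_i$ and its successor in $B_j$; and symmetrically one in $B_j$ followed by one in $B_i$ (the cycle returns to its starting side). These are precisely the required arcs from $B_i$ to $B_j$ and from $B_j$ to $B_i$ in $D$.

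Combining the two claims shows that $(B_i)_{i=1}^t$ is a strong $\bivec{K_t}$-minor model in $D$. There is no real obstacle here; the only thing to be a bit careful about is using the ``sequence of vertices and hyperedges'' formulation of hypergraph connectivity (rather than only the partition version) so that the traversal argument in claim (i) is entirely routine, and ensuring in claim (ii) that the existence of \emph{both} transition directions on $C$ is made explicit rather than just asserting one arc.
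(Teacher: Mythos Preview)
Your proposal is correct and follows essentially the same approach as the paper: use the same branch-sets, verify strong connectivity of each $D[B_i]$, and extract arcs in both directions between any pair from the witnessing directed cycle. The only cosmetic difference is that the paper proves claim~(i) via the partition formulation of hypergraph connectivity (a short contradiction argument: a bipartition of $B_i$ with no arcs one way would give a bipartition of $B_i$ that no hyperedge of $\mathcal{C}(D)[B_i]$ crosses), whereas you use the equivalent sequential formulation and concatenate walks along the witnessing cycles; both are routine.
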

\begin{proof}
Let $(B_i)_{i=1}^{t}$ be the branch-sets of a $K_t$-minor model in $\mathcal{C}(D)$. Then for every $i \in [t]$, the subhypergraph $\mathcal{C}(D)[B_i]$ is connected. We claim this means that $D[B_i]$ is strongly connected. Indeed, if not then there is a partition of $B_i$ into non-empty sets $X, Y$ such that no arc in $D$ starts in $X$ and ends in $Y$. But then no directed cycle in $D[B_i]$ can intersect both $X$ and $Y$, contradicting the connectivity of $\mathcal{C}(D)[B_i]$.

Next we claim that for every distinct $i, j \in [t]$ there is an arc from $B_i$ to $B_j$, and an arc from $B_j$ to $B_i$ in $D$. Indeed, there exists a hyperedge $e$ in $\mathcal{C}(D)$ such that $e \subseteq B_i \cup B_j$ and $e \cap B_i \neq \emptyset \neq e \cap B_j$. Then there is a  directed cycle $C$ in $D$ with $V(C)=e$, and hence $C$ must traverse an arc starting in $B_i$ and ending in $B_j$, and vice versa. 

With the above observations we see that $(B_i)_{i=1}^{t}$ certify the existence of a strong $\bivec{K_t}$-minor of $D$, as desired.
\end{proof}

The proof of Theorem~\ref{thm:strong} is now immediate using Theorem~\ref{thm:main}.
\begin{proof}[Proof of Theorem~\ref{thm:strong}]
Let $D$ be a digraph with no strong $\bivec{K_t}$-minor. By Lemma~\ref{lemma:stronghyprelation} the cycle hypergraph $\mathcal{C}(D)$ is $K_t$-minor free, and Theorem~\ref{thm:main} implies $\vec{\chi}(D)=\chi(\mathcal{C}(D)) \le h(t)\le 2g(t)$.
\end{proof}

We conclude this section with the lower-bound construction for the dichromatic number of digraphs with no strong $\bivec{K_t}$-minor. 

\begin{proof}[Proof of Proposition~\ref{prop:digraphlower}]
Let $t \ge 2$ be given. Let $A, B, C$ be $3$ vertex-disjoint sets of size $t-1$, and let $D$ be the digraph on the vertex-set $A \cup B \cup C$ with the following arcs: For every pair of distinct vertices $u, v$ that are contained together in one of $A$, $B$ or $C$, we add both the arcs $(u,v)$ and $(v,u)$. Finally, we add all arcs from $A$ to $B$, from $B$ to $C$ and from $C$ to $A$. 

For $\vec{\chi}(D) \ge \left\lceil\frac{3}{2}(t-1)\right\rceil$ it suffices to note that every set of more than $2$ vertices spans a directed cycle in $D$, and hence at least $\frac{3(t-1)}{2}$ colors are needed in any coloring with acyclic color classes.

Next, suppose towards a contradiction that $D$ contains $\bivec{K_t}$ as a strong minor. Then there are disjoint non-empty sets $(B_i)_{i=1}^{t}$ such that each of $D[B_i], i \in [t]$ is strongly connected and such between any two sets $B_i, B_j$ there exist arcs in both directions. These conditions imply that each $B_i$ is either contained in one of $A, B, C$ or intersects each of $A, B$ and $C$ in at least one vertex (and thus has size at least $3$ in this case). Since $D$ contains $3(t-1)<3t$ vertices, at least one of the sets has to be fully contained in either $A, B$ or $C$. W.l.o.g. assume that $B_1 \subseteq A$. Since $|A|=t-1$, there must exist $i \ge 2$ such that $B_i \cap A=\emptyset$. Then by the above we have $B_i \subseteq B$ or $B_i \subseteq C$. But then $B_1$ and $B_i$ are not connected by arcs in both directions, a contradiction. This concludes the proof.
\end{proof}

\end{document}